\renewcommand\footnotemark{}
\def\Real{\mathbb{R}} 
\def\Par{\mathcal{P}} 
\def\Acc{\Gamma} 
\def\Poly{\mathcal{S}} 
\def\Mat{\mathcal{M}} 
\def\field{\mathbb{F}}
\def\kos{\mathbb{K}}
\def\sss{\smallsetminus}
\newcommand*{\lon}{
       \mskip1mu
        \relax
        {:}
        \mskip1mu
        \relax
} 
\theoremstyle{plain}
\newtheorem{proposition}{Proposition}[section]
\newtheorem{theorem}[proposition]{Theorem}
\newtheorem{corollary}[proposition]{Corollary}
\newtheorem{lemma}[proposition]{Lemma}
\theoremstyle{definition}
\newtheorem{definition}[proposition]{Definition}
\newtheorem{remark}[proposition]{Remark}
\newtheorem{lpp}[proposition]{Linear Programming Problem}
\title{Common Information, Matroid Representation, and Secret Sharing for Matroid Ports}
\author[1]{Michael Bamiloshin}
\affil[1]{Universitat Rovira i Virgili, Tarragona, Catalonia, Spain}
\author[2]{Aner Ben-Efraim} 
\affil[2]{Ariel University, Ariel, Israel}
\author[1]{Oriol Farr\`as}
\author[3]{Carles Padr\'o}
\affil[3]{Universitat Polit\`ecnica de Catalunya, Barcelona, Spain\\
\texttt{michael.bamiloshin@urv.cat, anermosh@post.bgu.ac.il, oriol.farras@urv.cat, carles.padro@upc.edu}}
\begin{document}

\maketitle

\begin{abstract}
Linear information and rank inequalities as,
for instance, Ingleton inequality, 
are useful tools in information theory and matroid theory.
Even though many such inequalities have
been found, it seems that most of them remain undiscovered.
Improved results have been obtained
in recent works by using the properties from which they are derived
instead of the inequalities themselves.
We apply here this strategy to 
the classification of matroids according to their representations
and to the search for bounds on secret sharing for matroid ports.

\noindent 
\textbf{Key words.}
Matroid representation, Secret sharing, Information inequalities, Common information, Linear programming.
\end{abstract}
\makeatletter{\renewcommand*{\@makefnmark}{}
	\footnotetext{The first and third authors were supported by the grant 2017 SGR 705 from the Government of Catalonia and grant RTI2018-095094-B-C21 “CONSENT” from the Spanish Government. Also, the first author has received funding from the European Union's Horizon 2020 research and innovation programme under the Marie Sk\l{}odowska-Curie grant agreement No. 713679 and from the Universitat Rovira i Virgili. The third author was supported by ISF grant 152/17. The fourth author was supported by the Spanish Government through grant MTM2016-77213-R.}\makeatother}



\section{Introduction}

Some of the concepts appearing next
are defined in Section~\ref{sec:MatrandPolys}.
The reader is referred to the books~\cite{Oxl92,Wel76} on matroid theory 
and~\cite{Yeu08} on information theory, and the surveys~\cite{Bei11,Pad12} on secret sharing
for additional information about these topics.

\subsection{Matroid Representation}
\label{pt:MatRep}

Relevant applications in information theory,
especially in secret sharing and network coding,
brought to light the class of \emph{entropic} matroids,
which contains the well-known class of linear matroids. 

An \emph{entropic vector} is formed by
the joint Shannon entropies of all subsets 
of a finite set of discrete random variables.
Every entropic vector is the rank function
of a polymatroid.
A polymatroid is \emph{entropic} if its
rank function is a multiple of an entropic vector.
Limits of entropic polymatroids
are called \emph{almost entropic}.
Both representation by partitions~\cite{Matus99}
and by almost affine codes~\cite{SiAs98}
are characterizations of entropic matroids.

In the same way that linear matroids
are defined from configurations of vectors in a vector space,
configurations of vector subspaces determine
\emph{linear polymatroids}.
A \emph{folded linear} matroid is such that 
some multiple of its rank function 
corresponds to a linear polymatroid.
Folded linear matroids have been called 
\emph{multilinear} or \emph{multilinearly representable}
in the literature. 
Since no multilinear algebra is involved, 
that terminology may be misleading.
The name proposed here is motivated by the analogy
with folded Reed-Solomon codes.

It is well known that linear polymatroids and,
consequently, folded linear matroids are entropic.
Franti\v{s}ek Mat\'u\v{s}~\cite{Matus17} recently proved
that algebraic matroids are almost entropic.

Figure~\ref{classes}, an update of the corresponding diagram in~\cite{Matus18},
illustrates the current knowledge about the connections between
the aforementioned classes of matroids.
A detailed explanation is given in Section~\ref{sec:MatrandPolys}. 
There is a number of tools to deal with that classification.
Among them, linear information and rank inequalities are especially useful.
\emph{Linear information inequalities}, such as Zhang--Yeung inequality~\cite{ZhYe98},
are the linear inequalities that are satisfied
by the rank function of every entropic polymatroid.
The ones that, like Ingleton inequality~\cite{Ing71}, 
are satisfied by the rank function of every linear polymatroid
are called \emph{linear rank inequalities}.

\begin{figure}
\centering
\begin{tikzpicture}[scale=0.9,framed,background rectangle, squarednode/.style={rectangle, draw=red!60, fill=red!5, very thick, minimum size=5mm},
]
\draw[black, very thick] (2.5,0) node{entropic};
\draw[black, very thick] (4.6,0.2) node{almost};
\draw[black, very thick] (4.6,-0.2) node{entropic};
\draw[blue, very thick] (-0.2,0) ellipse (39 mm and 33mm); 
\draw[blue, very thick] (-0.4,0) ellipse (61 mm and 36mm); 
\draw[blue, very thick] (-3.7,0) ellipse (26 mm and 18mm); 
\draw[black, very thick] (-5.2,0) node{algebraic};
\draw[blue, very thick] (-1.2,0) ellipse (27mm and 19mm); 
\draw (0.1,0) node{folded linear};
\draw (0.1,-2.5) node{?};
\draw[blue, very thick] (-2.5,0) ellipse (12mm and 10mm); 
\draw (-2.5,0) node{linear};
\end{tikzpicture}
\caption{A classification of matroids. Discussed in Section~\ref{sec:MatrandPolys}.}
\label{classes}
\end{figure}

Ingleton inequality was used to prove the existence of an infinite number of
excluded minors for the class of matroids 
that are linear over any given infinite field~\cite{MNW09}.
That result has been extended to the class of 
folded linear matroids over any given field and,
by using Zhang--Yeung inequality instead of Ingleton inequality, 
to the classes of almost entropic matroids and 
algebraic matroids~\cite{Matus18}. 

\subsection{Common Information}
\label{pt:intCI}

Besides Ingleton and Zhang--Yeung inequalities,
many other linear information and rank inequalities have
been found~\cite{DFZ06,DFZ09,DFZ11,Kin11,MMRV02,Mat07-3}.
Nevertheless, only a few techniques to derive 
such inequalities are known, and it appears 
that many more inequalities remain unknown.

Linear information and rank inequalities
are fundamental in the linear programming technique
that has been used to find bounds on the information ratio of
secret sharing schemes~\cite{BLP08,BeOr11,MPY16,Met11,PVY13}
and on the achievable rates in network coding~\cite{DFZ07,TCG17,Yeu08}. 
An improvement to that technique
has been recently proposed~\cite{FKMP20}.
Specifically, instead of known inequalities, the properties from which most
linear information and rank inequalities are derived are used as constraints. 
The notion of \emph{common information} of two random variables
is at the core of most of those properties.
Most of the known linear information inequalities are obtained
from the concept of \emph{AK-common information},
derived from {Ahlswede--K\"{o}rner lemma}~\cite{AhKo77,AhKo06,CsKo81},
or from the \emph{copy lemma}~\cite{DFZ06,DFZ11}.
According to~\cite{DFZ09}, all linear rank inequalities
that were known in 2009 were derived from the \emph{common information property}
and, to the best of our knowledge, that is still the case nowadays. 
Nevertheless, some restricted linear rank inequalities
have been presented since then.
Namely, \emph{characteristic-dependent}
inequalities~\cite{DFZ15,PeSa19}, 
which are satisfied by
all polymatroids that are linearly
representable over fields of a given characteristic.

Several new lower bounds on the information ratio of secret  
sharing schemes have been obtained by using
that improved linear programming technique~\cite{FKMP20}.
For instance, by using the common information property, the exact
values of the optimal information ratios of \emph{linear} secret sharing schemes
for \emph{all} access structures on five players and \emph{all}
graph access structures on six players have been determined,
concluding the projects undertaken in~\cite{Dij95,JaMa96}
when restricted to linear schemes.
Moreover, some of the existing lower bounds for general
(that is, non-linear) secret sharing schemes for
those and other access structures have been improved
by using the AK-common information.
The analogous application of the copy lemma
has been described in~\cite{GuRo2019}.

On the negative side, the application of that technique
is currently limited to solving linear programming problems
that provide bounds for particular cases.
Moreover, because of the huge number of
variables and  constraints, only problems
with small size can be solved.
In contrast, several general results,
such as the best known general lower bound for secret sharing~\cite{Csi97},
have been obtained from the simpler technique involving
only Shannon inequalities.

The search for new techniques
to derive linear rank and information inequalities and
further improve the aforementioned linear programming technique 
is worth undertaking.
For example, the common information
property is solely based on the intersection of vector subspaces.
It is possible that a deeper use of linear algebra, as in the search
for characteristic-dependent linear rank inequalities~\cite{DFZ15,PeSa19},
will provide some results.

\subsection{Secret Sharing for Matroid Ports}

A perfect secret sharing scheme is \emph{ideal}
if all shares have the same size as the secret value,
which is the smallest possible. 
The entropic vector given by the random variables
defining an ideal scheme determines an entropic matroid~\cite{BrDa91,Matus99}.
The access structure is a port of that matroid~\cite{BrDa91,MaPa10}.
As a consequence, the access structures of ideal secret sharing
schemes are precisely the ports of entropic matroids,
while the ports of folded linear matroids
coincide with the access structures of 
ideal \emph{linear} secret sharing schemes. 

The optimal information ratio of 
secret sharing schemes for the ports of a matroid
measures in some way how far it is from being entropic.
This parameter has been studied for the
Vamos matroid~\cite{BeLi08,BLP08,FKMP20,GuRo2019,MaPa10,Met11},
the first known example of a non-entropic matroid~\cite{Sey92},
and also for other non-entropic matroids~\cite{FKMP20,PVY13}.
For the ports of the Vamos matroid,
the application of the linear programming technique
with the common information property 
yielded the exact value of the optimal 
information ratio of linear secret sharing schemes~\cite{FKMP20}.
Moreover, G\"urpinar and Romashchenko~\cite{GuRo2019}
recently obtained the current best lower bound for the general case
by using that technique with the copy lemma.

\subsection{Our Results}

We investigate the application of the improved linear
programming technique introduced in~\cite{FKMP20}
to the classification of matroids
according to the different representations 
discussed in Section~\ref{pt:MatRep}.
First, we prove in Theorem~\ref{st:Ingspar}
an interesting consequence of the results 
by Nelson and van der Pol~\cite{NP18}. 
Namely, every almost entropic sparse paving matroid
must satisfy Ingleton inequality.
Second, we present an almost complete classification
of the matroids on eight points.
Our starting point is the paper by Mayhew and Royle~\cite{MaRo08}, 
in which the linear matroids on eight points are determined.
Specifically, up to isomorphism, there are exactly $44$ matroids on eight points
that are not linear. All of them are sparse paving matroids.
Exactly $39$ of them do not satisfy Ingleton inequality, 
and hence they are not almost entropic. 
Therefore, there are five sparse paving matroids 
that are not linear but satisfy Ingleton inequality.
We prove in Section~\ref{pt:folded}
that exactly two of them are folded linear matroids.
They are the smallest folded linear matroids that are not linear.  
Those two matroids were known to be algebraic.
Unfortunately, we could not determine
whether or not the other three matroids are  
algebraic or almost entropic.
Some results about matroids on nine points
are presented in Section~\ref{subs:9,10-ptMat}.
Specifically, we found $171$ that
satisfy Ingleton inequality but do not 
have the common information property.
They are among the smallest matroids in that situation.
One of those examples is the tic-tac-toe matroid. 
Those $171$ matroids are not folded linear,
but we could not determine whether or not
they are algebraic or almost entropic.

In addition, by using the improved linear programming technique, 
we find new lower bounds on the information ratio
of secret sharing schemes for several matroid ports.
By combining our bounds for matroids on eight points
with the results in~\cite{NP18},
we present in Theorem~\ref{thm:l&s4nonIC}
lower bounds that apply to 
every sparse paving matroid that
does not satisfy Ingleton inequality.
We found a lower bound on the information
ratio of \emph{linear} secret sharing
schemes for the ports of
the tic-tac-toe matroid and some of the 
aforementioned $171$ related matroids.
Finally, we determined the exact value
of the optimal information ratio of
linear secret sharing schemes for a port
of the tic-tac-toe matroid.

In this work, we used the Gurobi\textsuperscript{TM} optimizer for solving the linear programming problems, and the SageMath matroid package for specific matroid operations. 
The code we used is available at \url{https://github.com/bmilosh/Common-Information-and-Matroid-Ports}.

\section{Preliminaries}\label{sec:MatrandPolys}

The number of elements of the finite set $X$ is denoted by $|X|$ 
and $\Par(Q)$ denotes the power set of $Q$.
For a positive integer $m$, we notate $[m] = \{1, \ldots, m\}$. 
We use a compact notation for set unions,
that is, we write $XY$ for $X \cup Y$ and $Xy$ for $X \cup \{y\}$.
In addition, we write $X \sss Y$ for the set difference
and $X \sss x$ for $X \sss \{x\}$.
The reader should be aware that a slightly different operation symbol
is used in expressions like 
$M \setminus B$ or $\Gamma \setminus B$
to denote operations that will be defined later in this section, 
namely, \emph{deletion} 
in polymatroids or, respectively, access functions.


\subsection{Matroids and Polymatroids}
\label{pt:matbas}

\begin{definition}\label{df:polym1}
Given a finite set $Q$ and a function
$f\colon\mathcal{P}(Q)\rightarrow \Real$, 
the pair $(Q,f)$ is called a \textit{polymatroid} if
the following properties are satisfied for all $X,Y\subseteq Q$.
\begin{description}
	\item[(P1)] $f(\emptyset) = 0$.
	\item[(P2)]{\label{P2}} $f(X) \leq f(Y)$ if $X \subseteq Y$.
	\item[(P3)]{\label{P3}} $f(X \cap Y) + f(X \cup Y) \le f(X) + f(Y)$.
\end{description}
The set $Q$ and the function $f$ are, respectively,
the \textit{ground set} and the \textit{rank function} of the polymatroid.
The rank function of an \emph{integer polymatroid} only takes integer values.
A \emph{matroid} is an integer polymatroid  $(Q,r)$ 
such that $r(X) \le |X|$ for every $X \subseteq Q$.
\end{definition}

Some additional terminology and
properties about matroids are needed.
Let $M = (Q,r)$ be a matroid. 
The \emph{independent sets} of $M$ 
are the sets $X \subseteq Q$ with $r(X) = |X|$.
Every subset of an independent set is independent. 
The \emph{bases} of $M$ are the maximal independent sets,
and the minimal dependent sets are the \emph{circuits}.
All bases have the same number of elements,
which equals $r(Q)$, the \emph{rank of the matroid}.
A set $X \subseteq Q$ is a \emph{flat} of $M$
if $r(Xx) > r(X)$ for every $x \in Q \sss X$. 
The flats with rank $r(Q)-1$ are called \emph{hyperplanes}. 
In addition to the one given in Definition~\ref{df:polym1},
there are other equivalent sets of axioms characterizing matroids
which are stated in terms of the properties
of the independent sets, the circuits, 
the bases, or the hyperplanes.

In a \emph{simple} matroid,
all sets with one or two elements are independent.
A matroid of rank $k$ is \emph{paving}
if the rank of every circuit is either $k$ or $k - 1$.
It is \emph{sparse paving} if,
in addition, all circuits of rank $k - 1$ are flats,
which are called \emph{circuit-hyperplanes}.
The \emph{dual} of $M = (Q,r)$ 
is the matroid $M^* = (Q,r^*)$ with
$r^*(X) = |X| - r(Q) + r(Q \sss X)$
for every $X \subseteq Q$.
Equivalently, $M^*$ is the matroid
on $Q$ whose bases are the complements of the bases of $M$.

We introduce next the operations
that are used to define \emph{minors} of
matroids and polymatroids.
For a polymatroid $M = (Q,f)$ and a set $B \subseteq Q$,
the \emph{deletion $M \setminus B$ of $B$ from $M$}
is the polymatroid $(Q \sss B, \widehat{f})$
with $\widehat{f}(X) = f(X)$ for every $X \subseteq Q \sss B$,
while the \emph{contraction 
$M/B = (Q \sss B, \widetilde{f})$ of $B$ from $M$} is defined by
$\widetilde{f}(X) = f(XB) - f(B)$ for every $X \subseteq Q \sss B$. 
Every polymatroid that is obtained from $M$ by 
applying deletions and contractions is called a \emph{minor of $M$}.
Finally, observe that minors of matroids are matroids.

Let $S = (S_x)_{x \in Q}$ be a discrete random vector,
that is, a finite sequence of discrete random variables.
For every $X \subseteq Q$, take $h(X) = H(S_X)$,
the Shannon entropy of the 
discrete random variable $S_X = (S_x)_{x \in X}$.
Then $(h(X))_{X \in \Par(Q)}$ is the
\emph{entropic vector} associated to $S$.
Because of the basic properties of Shannon entropy,
every entropic vector is the rank function of a polymatroid~\cite{Fuj78,Fuj78-2}.
A polymatroid is \emph{entropic} if its rank function
is a multiple of an entropic vector.
The closure in $\Real^{\Par(Q)}$ of the set of entropic vectors
is a convex cone~\cite{Yeu08}.
Each element in this convex cone is the rank function
of an \emph{almost entropic} polymatroid.

We introduce next some notation that is motivated by 
this connection between Shannon entropy and polymatroids. 
By analogy with the conditional mutual information,
for a polymatroid $(Q,f)$ and sets $X,Y,Z \subseteq Q$, we write  
\[
f(Y \lon Z | X) =
f(XY) + f(XZ) - f(XYZ) - f(X)
\]
and, in particular,
\(
f(Y \lon Z) = f(Y \lon Z | \emptyset)
= f(Y) + f(Z) - f(YZ)
\)
and
\(
f(Y | X) =f(Y \lon Y | X) = f(XY) - f(X) 
\).

Consider a field $\field$, a vector space $V$ with finite dimension over $\field$
and a collection $(V_x)_{x \in Q}$  of vector subspaces of~$V$.
It is clear from basic linear algebra that the map $f$
defined by 
\(
f(X) = \dim \sum_{x\in X} V_x
\)
for every $X \subseteq Q$
is the rank function of a polymatroid.
Every such polymatroid is said to be 
\emph{linearly representable},
or simply \emph{linear}, over $\field$.
For a positive integer~$k$,
a \emph{$k$-folded $\field$-linear} matroid $(Q,r)$ is such that
the polymatroid $(Q,k r)$ is $\field$-linear.
As we mentioned in the Introduction,
folded linear matroids are also called
\emph{multilinear} or \emph{multilinearly representable}
in the literature.

Suppose now that $\field$ is a finite field
and take the dual vector space $V^*$.
The uniform probability distribution on $V^*$
and the projections $V^* \to V_x^*$ for $x \in Q$
determine a discrete random vector $(S_x)_{x \in Q}$.
Such random vectors are called \emph{linear}.
The entropic vector $h$ associated to $S$
satisfies $h(X) = f(X) \log|\field|$ for every $X \subseteq Q$.
Since every linear polymatroid admits a linear representation
over some finite field~\cite{Rad57},
linear polymatroids and folded linear matroids are entropic.

Consider a field extension $\kos /\field$ 
and a finite collection $(v_x)_{x \in Q}$ of elements in $\kos$.
For every $X \subseteq Q$, let $r(X)$ be the transcendence degree
of the field extension $\field(\{v_x\}_{x \in X})/\field$.
Then $r$ is the rank function of a matroid
$M$ with ground set $Q$.
In this situation, $M$ is \emph{algebraic over $\field$}
and $(v_x)_{x \in Q}$ is an \emph{algebraic representation of $M$}.

Given a positive integer $m$, 
a collection $(A_i)_{i \in [m]}$
of subsets of a finite set $Q$, and $I \subseteq [m]$,
we notate $A_I = \bigcup_{i \in I} A_i$.
A \emph{linear information inequality},
respectively \emph{linear rank inequality},
on $m$ variables
consists of a collection $(\alpha_I)_{I \in \Par([m])}$
of real numbers such that
\(
\sum_{I \in \Par([m])} \alpha_I f(A_I) \ge 0
\)
for every entropic,
respectively linear, polymatroid $(Q,f)$
and for every collection $(A_i)_{i \in [m]}$ of subsets of $Q$.
Since every linear polymatroid is entropic,
every information inequality is also a rank inequality.

\emph{Shannon information inequalities} are those that are 
derived from the polymatroid axioms in Definition~\ref{df:polym1}. 
Ingleton inequality~\cite{Ing71},
which can be written in a compact form as 
\begin{equation}
\label{eq:Ing}
f(A_2 \lon A_3) \le f(A_2 \lon A_3|A_1) + f(A_2 \lon A_3|A_4) + f(A_1 \lon A_4)
\end{equation}
was the first known example of a non-Shannon linear rank inequality.
The information inequality
\begin{equation}
\label{eq:ZY}
2 f(A_2 \lon A_3) \le f(A_1 \lon A_4) + f(A_1 \lon A_2 A_3) + 
3 f(A_2 \lon A_3| A_1) + f(A_2 \lon A_3| A_4)
\end{equation}
which was presented by Zhang and Yeung~\cite{ZhYe98},
was the first known example of a non-Shannon linear information inequality.

Folded linear matroids are entropic.
Every linear matroid is algebraic~\cite{Oxl92}.
It has been recently proved that
every algebraic matroid is almost entropic~\cite{Matus17}.
Vamos matroid is not almost entropic because 
it does not satisfy Zhang--Yeung inequality.
Non-Pappus matroid is a folded linear matroid
that is algebraic but not linear~\cite{Oxl92,SiAs98}.
Two examples of almost entropic matroids
that are not entropic were given in~\cite[Remarks 4, 5]{Matus18}.
Only one of them is algebraic.
A folded linear matroid that
is not algebraic was presented in~\cite{Ben16}.
It is not known if there exist entropic matroids that are not folded linear.
These facts are illustrated in Figure~\ref{classes}.

For every positive integer $k$
and any field $\field$,
the class of $k$-folded $\field$-linear 
matroids is closed by duality~\cite{JaMa94,Oxl92}.
It is unknown whether or not this is the case for 
the classes of algebraic or entropic matroids.
Remarkably, Kaced~\cite{Kac18} recently proved
that the class of almost entropic matroids is not closed by duality.
An explicit counterexample is presented in~\cite{Csi19}.

Every minor of an $\field$-linear polymatroid is $\field$-linear. 
That is, the class of $\field$-linear polymatroids is closed under minors.
The same applies to the class of almost entropic polymatroids~\cite[Lemma~1]{MaCs13}. 
The classes of linear, folded linear, algebraic~\cite[Corollary~6.7.14]{Oxl92},
and almost entropic matroids are closed under minors. 

\subsection{Secret Sharing}
\label{pt:presss}

\begin{definition}
An \emph{access function} on a finite set $P$ is a map
$\Gamma \colon \Par(P) \to \Real$ satisfying the following properties.
\begin{enumerate}
\item
$\Gamma(\emptyset) = 0$ and $\Gamma(P) = 1$.
\item
$\Gamma(X) \le \Gamma(Y)$ if $X \subseteq Y \subseteq P$.
\end{enumerate}
An access function is \emph{perfect} if 
its only values are $0$ and $1$.
The \emph{qualified} and \emph{forbidden} 
sets of the access function $\Gamma$ are
the ones with $\Gamma(X) = 1$ and, respectively, $\Gamma(X) = 0$. 
\end{definition}

\begin{definition}
For a polymatroid $(Q,f)$ and a point $p_o \in Q$
with $f(p_o) > 0$ and $f(Q \sss p_o) = f(Q)$,
the \emph{port of the polymatroid $(Q,f)$ at $p_o$}
is the access function $\Gamma$ on the set $P = Q \sss p_o$
defined by 
\[
\Gamma(X) = \frac{f(X \lon p_o)}{f(p_o)}.
\]
\end{definition}

The \emph{dual} $\Gamma^*$ 
of an access function $\Gamma$ on $P$
is defined by $\Gamma(X) = 1 - \Gamma(P \sss X)$ for every $X \subseteq P$.
If $\Gamma$ is the port of a matroid $M$ at $p_o$,
then its dual $\Gamma^*$ is the port of the dual matroid
$M^*$ at $p_o$.
Consider an access function $\Gamma$ on $P$ and
a subset $B \subseteq P$.
If $\Gamma(P \sss B) = 1$, 
the access function $\Gamma \setminus B$
on $P \sss B$ defined by 
$(\Gamma \setminus B)(X) = \Gamma(X)$
is the \emph{deletion of $B$ from $\Gamma$}.
If $\Gamma(B) = 0$, the access function $(\Gamma/B)$ with 
$(\Gamma/B)(X) = \Gamma(XB)$ is the 
\emph{contraction of $B$ from $\Gamma$}.
Every access function that is obtained from 
$\Gamma$ by deletions and contractions is
a \emph{minor} of $\Gamma$.
If $\Gamma$ is the port of a polymatroid $M = (Q,f)$ at $p_o$
and $B \subseteq P = Q \sss p_o$, 
then the minors $\Gamma \setminus B$ and $\Gamma/B$ are
the ports of $M \setminus B$ and, respectively, $M/B$ at $p_o$.

\begin{definition}
Let $P$ be a finite set of \emph{players}
and $Q = P p_o$ with $p_o \notin P$.
Let $\Gamma$ be an access function on $P$.
Let $S = (S_x)_{x \in Q}$ be a discrete random vector
and $(Q,h)$ the entropic polymatroid determined by $S$.  
Then $S$ is a \emph{secret sharing scheme} on $P$
with access function $\Gamma$ if the 
following properties are satisfied.
\begin{enumerate}
\item
$h(p_o) > 0$ and $h(P) = h(P p_o)$.
\item
$\Gamma$ is the port of $(Q,h)$ at $p_o$. 
\end{enumerate} 
The random variable $S_{p_o}$ corresponds to
the \emph{secret value}, and the \emph{share}
for a player $x \in P$ is given by the random variable $S_x$.
\emph{Linear} secret sharing schemes are those defined by
linear random vectors. 
A secret sharing scheme is \emph{perfect}
if its access function is perfect.
The \emph{information ratio} of a secret sharing scheme
is $\max_{x \in P} h(x)/h(p_o)$, that is,
the ratio between the maximum length of the shares
and the length of the secret.
\end{definition}

Only perfect secret sharing schemes are going to be considered in this work.
Perfect access functions are also called \emph{access structures}.
Each of them is determined by its minimal qualified sets.
An access structure is \emph{connected} if every player
is in some minimal qualified set.
All access structures in this paper
are supposed to be connected.
In a perfect scheme, 
$h(x) \ge h(p_o)$ for every $x \in P$. 
A perfect secret sharing scheme is \emph{ideal}
if $h(x) = h(p_o)$ for every $x \in P$.
The \emph{optimal information ratio} $\sigma(\Gamma)$
of an access structure $\Gamma$ is the infimum
of the information ratios of the secret sharing schemes for $\Gamma$,
while $\lambda(\Gamma)$ is the corresponding value when
restricting the optimization to linear secret sharing schemes.

A matroid is \emph{connected} if every pair of points
in the ground set lie in a common circuit. 
All ports of a connected matroid are connected access structures.
Moreover, a connected matroid is determined by any of its ports.

Let $S = (S_x)_{x \in Q}$ be an ideal secret sharing scheme
and let $h$ be the entropic vector associated to $S$.
Then the polymatroid $(Q,f)$ defined by
$f(X) = h(X)/h(p_o)$ for every $X \subseteq Q$ is a matroid~\cite{BrDa91}.
As a consequence, the access structures
of ideal secret sharing schemes coincide with the
ports of entropic matroids, and the ports
of folded linear matroids
are precisely the access structures of ideal linear secret sharing schemes.

\section{How to Use Undiscovered Information and Rank Inequalities}
\label{sec:InfIneq}

The title of this section is borrowed from~\cite{GuRo2019}.
It precisely describes the main idea behind the technique introduced in~\cite{FKMP20},
namely, using properties from which information and rank inequalities have been derived
instead of using known inequalities.

\subsection{Common Information}\label{subs:CI}
\label{pt:cip}

We say that a random variable $S_3$
\emph{conveys the common information\/}
of the random variables $S_1$ and $S_2$
if $H(S_3 | S_2) = H(S_3 | S_1) = 0$
and $H(S_3) = I(S_1 \lon S_2)$.
In general, given two random variables,
it is not possible to find 
a third one satisfying those conditions~\cite{GaKo73}.
Nevertheless, this is possible for every
pair of random variables in a linear random vector.
Most of the known non-Shannon rank inequalities
are derived from this fact~\cite{DFZ09}. 
A combinatorial abstraction of the notion of common information is
given in the next definition.

\begin{definition}
Let $(Q,f)$ be a polymatroid 
and let $A, B \subseteq Q$.
Then every subset $X_o \subseteq Q$ satisfying
\begin{description}
\item[(C1)]
$f(X_o | A) =  f(X_o | B) = 0$, and
\item[(C2)]
$f(X_o) = f(A \lon B)$
\end{description}
is called a \emph{common information for the pair $(A,B)$}.
If $X_o = \{x_o\}$, then the element $x_o$ is also called a
common information for the pair $(A,B)$.
\end{definition}

\begin{definition}
Consider polymatroids $(Q,f)$
and $(Q',f')$ with $Q \subseteq Q'$.
We say that $(Q',f')$ is an \emph{extension}
of $(Q,f)$ if $f(X) = f'(X)$ for every $X \subseteq Q$.
In this situation we will generally use the same symbol
for both rank functions.   
\end{definition}

\begin{definition}
\label{def:kcip}
A polymatroid $(Q,f)$ is
\emph{1-CI-compliant} if,
for every pair $(A,B)$
of subsets of $Q$, there exists an extension
$(Q x_o,f)$ such that $x_o$ is a common information
for the pair $(A,B)$.  
Inductively, for every integer $k > 1$,
a polymatroid $\Poly=(Q,f)$ is
\emph{$k$-CI-compliant} if,
for every pair $(A,B)$
of subsets of $Q$, there exists an extension
$(Q x_o,f)$ such that $x_o$ is a common information
for the pair $(A,B)$ and $(Q x_o,f)$ is $(k-1)$-CI-compliant.
A polymatroid is \emph{CI-complaint} if it is 
$k$-CI-compliant for every positive integer $k$.
\end{definition}

\begin{proposition}
Let $\field$ be a field.
Consider an $\field$-linear polymatroid
$(Q,f)$ and a pair $(A,B)$ of subsets of the ground set.
Then there exists an $\field$-linear extension
$(Qx_o,f)$ such that $x_o$ is a common information for $(A,B)$.
As a consequence, linear polymatroids and, in particular,
folded linear matroids are CI-compliant. 
\end{proposition}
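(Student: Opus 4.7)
The plan is to witness $x_o$ by the intersection subspace. Let $(V_x)_{x\in Q}$ be an $\field$-linear representation of $(Q,f)$ in an ambient vector space $V$, so that $f(X)=\dim V_X$ where $V_X=\sum_{x\in X}V_x$. Given $(A,B)$, define a new subspace $V_{x_o}:=V_A\cap V_B$ and consider the extended family $(V_x)_{x\in Qx_o}$ in the same ambient space $V$. I would first observe that because we have not altered any of the original $V_x$, the rank function of the enlarged linear polymatroid agrees with $f$ on every subset of $Q$, so this is indeed an $\field$-linear extension.

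Next I would verify the two conditions. Since $V_{x_o}\subseteq V_A$ we have $V_{x_o}+V_A=V_A$, hence $f(x_oA)=f(A)$, giving $f(x_o\lon x_o | A)=0$; the argument for $B$ is identical. For (C2), the standard dimension formula gives
\[
f(x_o)=\dim(V_A\cap V_B)=\dim V_A+\dim V_B-\dim(V_A+V_B)=f(A)+f(B)-f(AB)=f(A\lon B).
\]
Thus $x_o$ is a common information for $(A,B)$ in the extension.

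For the CI-compliance consequence I would argue by induction on $k$. The base case $k=1$ is exactly what was just proved. For the inductive step, given any pair $(A,B)$ in an $\field$-linear polymatroid, the construction above produces an $\field$-linear extension $(Qx_o,f)$ in which $x_o$ is a common information for $(A,B)$; since this extension is itself $\field$-linear, the induction hypothesis applies and says it is $(k-1)$-CI-compliant. Hence the original polymatroid is $k$-CI-compliant, and since $k$ was arbitrary, it is CI-compliant.

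Finally, for a folded $\field$-linear matroid $(Q,r)$ there is a positive integer $k$ with $(Q,kr)$ $\field$-linear. The definition of common information only involves equalities of the form $f(X_o|A)=f(X_o|B)=0$ and $f(X_o)=f(A\lon B)$, which are preserved under multiplying $f$ by a positive scalar; and extensions of $(Q,kr)$ produced by the above construction are themselves multiples of integer polymatroid extensions of $(Q,r)$. Therefore the CI-compliance transferred from $(Q,kr)$ yields the CI-compliance of $(Q,r)$. The only mildly delicate point I expect is this last scaling step: one has to be explicit that the whole inductive chain of extensions used to witness $k$-CI-compliance of the $\field$-linear polymatroid $(Q,kr)$ can be reinterpreted, after dividing all ranks by $k$, as extensions of $(Q,r)$ with the same common-information witnesses, since the scaling commutes with the formation of the intersection subspaces at every stage.
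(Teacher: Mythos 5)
Your proof is correct and takes essentially the same approach as the paper's: the witness $V_{x_o}=V_A\cap V_B$ is exactly the construction used there. The paper leaves implicit the verification of (C1)--(C2) via the dimension formula, the induction on $k$, and the rescaling step needed for folded linear matroids, all of which you supply correctly.
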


\begin{proof}
Consider a collection $(V_x)_{x\in Q}$ of vector subspaces
providing an $\field$-linear representation of $(Q,f)$.
For every $X \subseteq Q$, put $V_X = \sum_{x \in X} V_x$.
Given a pair $(A,B)$ of subsets of $Q$, 
take $V_{x_o} = V_A \cap V_B$. 
Then $(V_x)_{x \in Qx_0}$ is an $\field$-linear representation
of a polymatroid $(Qx_0,f)$ extending $(Q,f)$
in which $x_0$ is a common information for $(A,B)$. 
\end{proof}

\subsection{Ahlswede and K\"{o}rner's Information}\label{subs:AK}

Linear information inequalities can be derived 
from properties that are satisfied 
by every almost entropic polymatroid.
Specifically, all known 
linear information inequalities
have been derived from the copy lemma~\cite{ZhYe98} 
and the Ahlswede--K\"orner lemma~\cite{AhKo77,AhKo06,CsKo81} 
as used in~\cite{MMRV02}.

\begin{definition}
Let $(Q,f)$ be a polymatroid, and let $U,V,Z \subseteq Q$.
Then every subset $Z_o \subseteq Q$ such that
	\begin{description}
	\item[(AK1)] $f(Z_o|UV) = 0$,  
	\item[(AK2)] $f(U|Z_o) = f(U|Z)$ and $f(V|Z_o) = f(V|Z)$,
	\item[(AK3)] $f(UV|Z_o) = f(UV|Z)$
	\end{description}
	is called an \emph{AK-information for the triple $(U,V,Z)$}.
\end{definition}

We say that a polymatroid $(Q,f)$ is \emph{1-AK-compliant} if, 
for every triple $(U,V,Z)$ of subsets of $Q$, 
there exists an extension $(Q z_o, f)$ 
such that $z_o$ is an AK-information for the triple $(U,V,Z)$. 
Analogously to the discussion on the common information property, 
we can define \emph{$k$-AK-compliance} for every $k > 0$ and also \emph{AK-compliance}.
Next proposition was proved in~\cite{FKMP20}
from~\cite[Lemma~5]{MMRV02} and~\cite[Lemma~2]{Kac13}.
As a consequence, almost entropic polymatroids are AK-compliant.

\begin{proposition}
\label{lemma:AK}
For every almost entropic polymatroid
$(Q,f)$ and sets $U,V,Z \subseteq Q$,
there exists an almost entropic extension $(Q z_o,f)$ such that
$z_o$ is an AK-information for the triple $(U,V,Z)$. 
\end{proposition}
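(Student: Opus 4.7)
The plan is to combine the Ahlswede--K\"orner lemma from \cite[Lemma~5]{MMRV02}, which provides a block-coding construction of AK-type variables in the entropic regime, with the closure/approximation principle of \cite[Lemma~2]{Kac13}, which keeps such constructions inside the almost entropic cone under limits.

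First I would settle the entropic case. Realise $(Q,f)$ by a random vector $(S_x)_{x\in Q}$ and fix subsets $U,V,Z \subseteq Q$. The Ahlswede--K\"orner construction produces, for each block-length $n$, a random variable $T_n$ that is a deterministic function of $(S_U^n, S_V^n)$ and satisfies
\[
\tfrac{1}{n}H(S_U^n \mid T_n) \to H(S_U \mid S_Z), \quad
\tfrac{1}{n}H(S_V^n \mid T_n) \to H(S_V \mid S_Z), \quad
\tfrac{1}{n}H(S_U^n S_V^n \mid T_n) \to H(S_U S_V \mid S_Z).
\]
On the ground set $Q z_o$, consider the entropic polymatroid $g_n$ determined by $(S_x^n)_{x\in Q}$ together with $T_n$, rescaled by $1/n$. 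The sequence is bounded, since $g_n(X z_o) \le g_n(X) + g_n(UV)$, so after extracting a subsequence it converges to a polymatroid $g$ on $Q z_o$. Then $g$ is almost entropic by construction; the restriction of $g_n$ to subsets of $Q$ equals $f$ for every $n$, so $g$ extends $f$; and the displayed convergences force conditions (AK1), (AK2) and (AK3) to hold exactly for $z_o$.

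For a general almost entropic $(Q,f)$, approximate it by entropic polymatroids $f^{(k)} \to f$. Applying the previous step to each $f^{(k)}$ yields an almost entropic extension $g^{(k)}$ on $Q z_o$ satisfying the AK conditions with respect to $f^{(k)}$. The same boundedness argument lets us extract a convergent subsequence with limit $g$. Since (AK1)--(AK3) are linear equalities in the rank function, they are preserved in the limit, and the closure of the set of almost entropic polymatroids on $Q z_o$ ensures that $g$ is itself almost entropic.

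The main obstacle is the double limiting procedure: for each fixed $k$ the AK conditions arise only after the block-coding limit in $n$, and one then needs a further limit in $k$ while remaining inside the almost entropic cone. This is precisely the role played by \cite[Lemma~2]{Kac13}, which permits these two limits to be combined into a single diagonal argument without losing almost entropicity or the exact validity of (AK1), (AK2) and (AK3) on the final extension $g$.
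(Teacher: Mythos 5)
Your argument is correct and follows essentially the route the paper itself indicates: the paper gives no written proof but attributes the statement to~\cite{FKMP18}, derived from exactly the two ingredients you use, namely the Ahlswede--K\"orner block-coding construction of~\cite[Lemma~5]{MMRV02} for the entropic case and the closure argument of~\cite[Lemma~2]{Kac13} to pass to the almost entropic cone. Your handling of the limits (boundedness via $g_n(Xz_o)\le g_n(X)+g_n(UV)$, subsequence extraction, and the observation that (AK1)--(AK3) are linear equalities preserved under convergence) is the standard way to make that passage rigorous, so no gap remains.
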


As consequence of the following result from~\cite{FKMP20},
$k$-CI-compliant polymatroids are also $k$-AK-compliant. 

\begin{proposition}
\label{st:CI2AK}
If $x_o$ is a common information for the pair $(UV,Z)$, 
then $x_o$ is an AK-information for the triple $(U,V,Z)$. 
\end{proposition}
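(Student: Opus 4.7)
The plan is to verify the three AK-information conditions in turn for $x_o$, exploiting the strong constraints coming from $x_o$ being common information for the coarser pair $(UV,Z)$. Condition (AK1) is free: $f(x_o\mid UV)=0$ is exactly (C1) for the pair $(UV,Z)$. Condition (AK3) reduces to a short calculation, since (C1) yields $f(UVx_o)=f(UV)$ and (C2) yields $f(x_o)=f(UV\lon Z)=f(UV)+f(Z)-f(UVZ)$; combining these,
\[
f(UV\mid x_o)=f(UVx_o)-f(x_o)=f(UVZ)-f(Z)=f(UV\mid Z).
\]

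The heart of the argument is (AK2). The key intermediate fact I would prove is that $f(UV\lon Z\mid x_o)=0$. First I would assemble three auxiliary equalities: $f(UVx_o)=f(UV)$ and $f(Zx_o)=f(Z)$, both from (C1); and $f(UVZx_o)=f(UVZ)$, which follows because submodularity forces $f(x_o\mid UVZ)\le f(x_o\mid UV)=0$. Expanding the conditional mutual information and using (C2) then gives
\[
f(UV\lon Z\mid x_o)=f(UVx_o)+f(Zx_o)-f(UVZx_o)-f(x_o)=f(UV)+f(Z)-f(UVZ)-f(x_o)=0.
\]
The same trick produces $f(UZx_o)=f(UZ)$ and $f(VZx_o)=f(VZ)$ from $f(x_o\mid UZ)\le f(x_o\mid Z)=0$ and the analogous inequality with $V$.

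Next I would apply the elementary identity
\[
f(UV\lon Z\mid x_o)=f(U\lon Z\mid x_o)+f(V\lon Z\mid Ux_o),
\]
obtained by direct expansion of both sides. Each summand on the right is nonnegative by submodularity, and their sum is zero, so both vanish. Unfolding $f(U\lon Z\mid x_o)=0$ together with $f(UZx_o)=f(UZ)$ yields $f(Ux_o)-f(x_o)=f(UZ)-f(Z)$, i.e.\ $f(U\mid x_o)=f(U\mid Z)$; the symmetric decomposition with the roles of $U$ and $V$ swapped gives $f(V\mid x_o)=f(V\mid Z)$. There is no real obstacle here: the only substantive observation is that (C2) forces $f(UV\lon Z\mid x_o)$ to vanish, after which everything reduces to careful bookkeeping with Shannon inequalities.
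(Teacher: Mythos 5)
Your proof is correct and complete: (AK1) is immediate from (C1), your computation of (AK3) is right, and the key step for (AK2) --- showing $f(UV\lon Z\mid x_o)=0$ from (C2) together with $f(UVZx_o)=f(UVZ)$, then splitting it by the chain rule into two nonnegative terms that must each vanish --- is exactly the natural argument. The paper itself does not prove this proposition but only cites the full version of~\cite{FKMP18}, so there is nothing to contrast with; your write-up is the standard derivation one would expect there, and all the auxiliary equalities ($f(UZx_o)=f(UZ)$, $f(VZx_o)=f(VZ)$, etc.) are justified correctly via monotonicity of conditional rank under enlarging the conditioning set.
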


\subsection{Application to Secret Sharing}
\label{sec:SecShaSch}

We describe next the linear programming technique 
that has been extensively used (see the references in~\cite{FKMP20})
to find lower bounds in secret sharing
and the improvement on it proposed in~\cite{FKMP20}.
 
Let $(S_x)_{x \in Q}$ be a secret sharing scheme 
with access structure $\Gamma$ on the set of players
$P = Q \sss p_o$.
Let $(Q,h)$ be the entropic polymatroid
determined by it and take the polymatroid
$(Q,f)$ given by $f(X) = h(X)/h(p_o)$.
Then the vector $(f(X))_{X \in \Par(Q)}$
satisfies the linear constraints
\begin{description}
\item[(N)] $f(p_o) = 1$,
\item[($\Acc$)]
$f(X \lon p_o) = \Gamma(X)$ for every $X \subseteq P$
\end{description}
and also the polymatroid axioms (P1)--(P3) in Definition~\ref{df:polym1}.
Therefore, the vector $f$ is a feasible solution
of Linear Programming Problem~\ref{tab:LPkap1}.

\begin{lpp}
\label{tab:LPkap1}
For an access structure $\Gamma$ on the set $P$, 
the optimal value of this linear programming problem
is, by definition, $\kappa(\Acc)$.
	\begin{align*}
	\text{Minimize }\quad& v \\
	\text{subject to}\quad& v \ge f(x)  \text{ for every } x \in P\\
	&   \mathrm{(N)}, (\Acc), \mathrm{(P1), (P2), (P3)}
	\end{align*}
\end{lpp}

Since this applies to every secret sharing scheme 
with access structure $\Acc$ and the objective function
equals the information ratio, the optimal value $\kappa(\Acc)$ of this
linear programming problem is a lower bound on $\sigma(\Acc)$.
It is the best lower bound that can be obtained by using only
Shannon information inequalities~\cite{Csi97,MaPa10}.
That linear program can be improved
by adding non-Shannon information inequalities~\cite{BLP08,Met11,PVY13}
or, as proposed in~\cite{FKMP20},
constraints derived from AK-information
or common information.

\begin{lpp}
\label{tab:LPavlamci}
Consider an access structure $\Gamma$ on a set $P$
and a pair $(A_0,A_1)$ of subsets of $P$. 
The optimal value of this linear programming problem 
is a lower bound on $\lambda(\Acc)$.
	\begin{align*}
	\text{Minimize }\quad& v \\
	\text{subject to}\quad& v \ge f(x)  \text{ for every } x \in P\\
	&\mathrm{(N)}, (\Acc)\\
	& \mathrm{(C1), (C2)}\text{ for } (A_0,A_1) \text{ and } x_o\\
	&\mathrm{(P1), (P2), (P3)} \text{ on the set } Q x_o. \\
	\end{align*}
\end{lpp}

\begin{lpp}
	\label{tab:LPavlamak+}
	Let $U,V,Z\subseteq P$. The optimal value of this linear programming problem 
	is a lower bound on $\sigma(\Acc)$.
	\begin{align*}
	\text{Minimize }\quad& v \\
	\text{subject to}\quad& v \ge f(x)  \text{ for every } x \in P\\
	&\mathrm{(N)}, (\Acc)\\
	& \mathrm{(AK1), (AK2), (AK3)}\text{ on }z_0\text{ and } (U,V,Z)\\
	& \mathrm{(P1), (P2), (P3)} \text{ on the set } Q z_o. 
	\end{align*}
\end{lpp}

These linear programming problems can be extended by adding
the common information or the AK-information
for more pairs or, respectively, triples of sets.

\subsection{Application to Classification of Matroids}

Linear information inequalities provide necessary
conditions for a matroid to be almost entropic
and, as a consequence of the result in~\cite{Matus17},
also to be algebraic.
The same applies to linear rank inequalities 
with respect to the class of folded linear matroids.
A polymatroid is \emph{Ingleton-compliant},
respectively \emph{ZY-compliant},
if Ingleton inequality~(\ref{eq:Ing}),
respectivey Zhang--Yeung inequality~(\ref{eq:ZY}),
holds for every  collection
$(A_i)_{i \in [4]}$ of subsets of the ground set. 
As a consequence of the proofs for 
those inequalities~\cite{DFZ09,Kac13,MMRV02},
$1$-CI-compliant and $1$-AK compliant polymatroids
are Ingleton-compliant and, respectively, ZY-compliant.
Those inequalities are related to a special
configuration introduced in~\cite{AlHo95}.

\begin{definition}
A matroid $(Q,r)$ satisfies the \emph{bundle condition} 
if it does not contain four flats $(A_i)_{i \in [4]}$ such that 
every flat has rank $2$, the union of every pair of flats
has rank $3$ except for $r(A_1 A_4) = 4$, 
and the union of every three or four flats has rank $4$.
\end{definition}

Vamos matroid is among the smallest ones
violating the bundle condition,
and the one with the minimum
number of dependent hyperplanes. 
If a matroid does not satisfy the bundle condition,
then the collection $(A_i)_{i \in [4]}$ described in
the previous definition violates both
Ingleton and Zhang--Yeung inequalities
as expressed in~(\ref{eq:Ing}) and~(\ref{eq:ZY}), respectively.
Therefore, almost entropic matroids and, in particular,
algebraic matroids satisfy the bundle condition. 
Moreover, the sparse paving matroids 
that are Ingleton-compliant coincide with those
satisfying a generalization of the bundle 
condition~\cite[Corollary~3.2]{NP18}.

\begin{proposition}
\label{t:SPIC}
Let $M$ be a sparse paving matroid of rank $k \ge 4$. 
Then $M$ is not Ingleton-compliant if and only if 
there exist five pairwise disjoint subsets
$B$, $A_1, A_2, A_3,$ $A_4$ of the ground set 
with $|B| = k - 4$ and $|A_i| = 2$ 
such that  $B A_1 A_4$ is a basis
and all the other sets of the form $B A_i A_j$ with $i\neq j$
are circuit-hyperplanes.
\end{proposition}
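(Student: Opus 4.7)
The plan is to prove the two directions separately. The ``if'' direction is a direct one-line computation using the very restrictive form of the rank function of a sparse paving matroid. The ``only if'' direction is essentially the content of Corollary~3.2 of Nelson and van der Pol~\cite{NP18}, and its proof proceeds by a case analysis that exploits the same restrictive form.

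For the ``if'' direction, I set $C_i = B A_i$ for $i \in [4]$ and plug the tuple $(C_1, C_2, C_3, C_4)$ into the Ingleton inequality~(\ref{eq:Ing}). Since $B, A_1, A_2, A_3, A_4$ are pairwise disjoint, $|C_i| = k-2$, and because $M$ is paving every subset of size less than $k$ is independent, so $r(C_i) = k-2$. For $i \neq j$ the set $C_i C_j = BA_i A_j$ has size $k$, so by hypothesis $r(C_i C_j) = k$ precisely when $\{i,j\} = \{1,4\}$ and $r(C_i C_j) = k-1$ otherwise. Any union of three or four of the $C_i$ has size at least $k+2$, hence rank $k$. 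Feeding these values into~(\ref{eq:Ing}), the left-hand side becomes $5(k-1) = 5k-5$, while the right-hand side becomes $2(k-2) + 3k = 5k-4$, so Ingleton's inequality fails on $(C_1,C_2,C_3,C_4)$ and $M$ is not Ingleton-compliant.

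For the ``only if'' direction, following~\cite{NP18}, I would assume $M$ is not Ingleton-compliant and fix a four-tuple $(X_1, X_2, X_3, X_4)$ of subsets of $Q$ for which the Ingleton expression is strictly negative. The key is that in a sparse paving matroid of rank $k$ the rank function is extremely rigid: $r(X) = |X|$ whenever $|X| < k$, and otherwise $r(X) \in \{k-1, k\}$ with the value $k-1$ attained only on circuit-hyperplanes. This rigidity severely restricts which combinations of values the ten rank terms in Ingleton can realise. A careful inspection shows that the deficit can come only from pairwise unions $X_i X_j$ of size exactly $k$ that are circuit-hyperplanes, and that the only pattern in which the sum of the five ``LHS'' rank terms can fall below the sum of the five ``RHS'' rank terms is when five of the six pairwise unions are circuit-hyperplanes and the sixth is a basis, all sharing a common ``core'' of size $k-4$. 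Letting $B$ be this common core and defining the $A_i$ as the four pairwise disjoint $2$-element complements that together with $B$ form the relevant size-$k$ sets yields exactly the configuration of the claim, with $BA_1A_4$ the unique basis.

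The main obstacle is the case analysis in the ``only if'' direction: one must rule out every alternative rank pattern (for instance configurations in which some $X_i$ properly contains another, or in which circuit-hyperplanes arise on both sides in asymmetric ways) and show that only the described arrangement of five circuit-hyperplanes and one distinguished basis produces a strictly negative Ingleton expression. This combinatorial step is carried out in detail in~\cite[Section~3]{NP18}; the ``if'' direction is then essentially immediate from the rank computation above.
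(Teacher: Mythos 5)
The paper gives no proof of this proposition at all---it is imported verbatim as Corollary~3.2 of Nelson and van der Pol~\cite{NP18}---so your proposal, which fully verifies the ``if'' direction and defers the case analysis of the converse to~\cite{NP18}, is at least as complete as what the paper offers and rests on the same source. Your rank computation for the ``if'' direction checks out: with $C_i = BA_i$ one has $r(C_i) = k-2$, the five circuit-hyperplane unions contribute $5(k-1) = 5k-5$ against $2(k-2) + 3k = 5k-4$ from the remaining five terms of the rearranged Ingleton expression, so the inequality is violated by exactly~$1$.
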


\begin{corollary}
\label{st:minIng}
If a sparse paving matroid $M$ is not Ingleton-compliant, 
then there is a minor of $M$ on eight points that is not Ingleton-compliant. 
\end{corollary}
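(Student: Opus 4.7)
The plan is to take as the witnessing minor the explicit $8$-point object produced by the configuration in Proposition~\ref{t:SPIC}.

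First, apply Proposition~\ref{t:SPIC} to obtain pairwise disjoint subsets $B, A_1, A_2, A_3, A_4$ of the ground set $Q$ of $M$ with $|B| = k - 4$ and $|A_i| = 2$ (so in particular $k \ge 4$), such that $BA_1A_4$ is a basis of $M$ and each $BA_iA_j$ with $\{i, j\} \ne \{1, 4\}$ is a circuit-hyperplane. Then form the minor
\[
N \;=\; (M/B) \setminus \bigl(Q \sss BA_1A_2A_3A_4\bigr),
\]
whose ground set is $A_1A_2A_3A_4$, of size exactly $8$, and whose rank function satisfies $r_N(X) = r(BX) - (k-4)$ for $X \subseteq A_1A_2A_3A_4$.

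Next, verify that Ingleton inequality~(\ref{eq:Ing}) fails in $N$ when applied to the $4$-tuple $(A_1, A_2, A_3, A_4)$. The required ranks are read off from the configuration: in any sparse paving matroid of rank $k$ every subset of size less than $k$ is independent (since all circuits have size $k$ or $k+1$), which together with the circuit-hyperplane/basis data yields $r_N(A_i) = 2$, $r_N(A_iA_j) = 3$ for $\{i,j\} \neq \{1,4\}$, and $r_N(A_1A_4) = 4$. For the two triple ranks $r_N(A_1A_2A_3)$ and $r_N(A_2A_3A_4)$, use that $BA_1A_2$ and $BA_2A_3$ are flats of $M$ (being hyperplanes) disjoint from $A_3$ and $A_4$ respectively, so their closure strictly grows when these sets are added, forcing the ambient rank $k$ to be reached; hence both ranks equal $4$. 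Substituting into~(\ref{eq:Ing}) gives $\mathrm{LHS}=1$ and $\mathrm{RHS}=0$, so $N$ witnesses the failure of Ingleton on $8$ points.

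The only step not directly visible from the definitions is the computation of the two triple ranks, which relies on the ``flat plus outside element forces rank to jump to the ambient value'' argument described above. Everything else reduces to unfolding the definitions of contraction, deletion, and the expanded form of Ingleton's expression.
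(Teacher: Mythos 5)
Your construction is correct and is exactly the argument the paper intends (the corollary is stated without proof as an immediate consequence of Proposition~\ref{t:SPIC}): contract $B$, delete the remaining points, and check that $(A_1,A_2,A_3,A_4)$ still violates Ingleton in the resulting $8$-point minor. All the rank computations check out --- in particular $r(B)=|B|$ and $r(BA_i)=k-2$ because every set of size less than $k$ in a paving matroid is independent, and the two triple ranks follow from the circuit-hyperplanes $BA_1A_2$ and $BA_2A_3$ being flats of rank $k-1$ --- so the substitution into~(\ref{eq:Ing}) giving $1\le 0$ is valid.
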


As a consequence, the class of Ingleton-compliant
sparse paving matroids has a finite number of 
forbidden minors~\cite[Theorem 1.3]{NP18}.
In contrast, the set of excluded minors
for the class of Ingleton-compliant matroids is infinite~\cite{MNW09}.
By combining Proposition~\ref{t:SPIC} with 
a recent result about algebraic matroids~\cite{Matus17},
the following remarkable property of sparse paving matroids is
easily derived.

\begin{theorem}
\label{st:Ingspar}
If a sparse paving matroid is not Ingleton-compliant, 
then it is not ZY-compliant
and hence it is neither almost entropic nor algebraic.
\end{theorem}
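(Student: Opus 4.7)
The plan is to feed the combinatorial witness produced by Proposition~\ref{t:SPIC} directly into Zhang--Yeung inequality~(\ref{eq:ZY}). Let $M = (Q, r)$ be a sparse paving matroid that is not Ingleton-compliant; since Proposition~\ref{t:SPIC} characterises this condition for $k \ge 4$, we may assume the rank $k$ is at least $4$, and obtain pairwise disjoint sets $B, A_1, A_2, A_3, A_4 \subseteq Q$ with $|B| = k - 4$, $|A_i| = 2$, such that $B A_1 A_4$ is a basis while each other $B A_i A_j$ is a circuit-hyperplane. The proof will substitute the four sets $F_i := B \cup A_i$ into~(\ref{eq:ZY}) and read off a violation of magnitude exactly $1$, independent of $k$.

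The rank values needed are all forced by the sparse paving structure. Every set of fewer than $k$ elements is independent, so $r(F_i) = k - 2$. By hypothesis $r(F_i \cup F_j) = r(B A_i A_j)$ equals $k-1$ when $\{i, j\} \neq \{1, 4\}$ and equals $k$ when $\{i, j\} = \{1, 4\}$. For $\{i, j\} \neq \{1, 4\}$ and any third index $\ell$, the hyperplane $B A_i A_j$ is a proper flat, so any element of the disjoint set $A_\ell$ lies outside it and $r(F_i \cup F_j \cup F_\ell) = k$. Expanding the mutual and conditional mutual information terms then yields $r(F_2 \lon F_3) = k - 3$, $r(F_1 \lon F_4) = k - 4$, $r(F_1 \lon F_2 F_3) = k - 3$, and $r(F_2 \lon F_3 | F_\ell) = 0$ for $\ell \in \{1, 4\}$.

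Substituting into~(\ref{eq:ZY}) produces $2(k - 3)$ on the left and $(k - 4) + (k - 3) + 3 \cdot 0 + 0 = 2k - 7$ on the right, a failure by exactly $1$ regardless of $k$. Hence $M$ is not ZY-compliant and therefore not almost entropic; by Mat\'u\v{s}'s theorem~\cite{Matus17} already recalled in Section~\ref{sec:MatrandPolys}, $M$ is also not algebraic. The proof has essentially no substantive obstacle: it reduces to the six rank computations above, each forced by the basis/circuit-hyperplane data together with the paving assumption. The only mildly delicate point is checking that both conditional terms $r(F_2 \lon F_3 | F_\ell)$ vanish, which uses that, with $B$ already on the conditioning side, adjoining any single further $A$-block produces a flat of rank $k-1$ while adjoining two further $A$-blocks saturates at rank $k$, making the appropriate alternating sum cancel.
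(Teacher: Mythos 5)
Your proposal is correct and follows exactly the paper's proof, which simply asserts that the Zhang--Yeung inequality~(\ref{eq:ZY}) fails for the collection $(B A_i)_{i \in [4]}$ supplied by Proposition~\ref{t:SPIC}; you have merely written out the rank computations (all of which check out, giving a violation of $2k-6 \le 2k-7$) that the paper leaves implicit.
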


\begin{proof}
If a sparse paving matroid admits the configuration described in
Proposition~\ref{t:SPIC},
then Zhang--Yeung inequality~(\ref{eq:ZY})
does not hold for $(B A_i)_{i \in [4]}$.
\end{proof}

By using the result in Proposition~\ref{t:SPIC}, 
Nelson and van der Pol~\cite{NP18}
proved that the number of
Ingleton-compliant matroids is
doubly exponential on the size of the ground set.
This indicates that the power of Ingleton inequality
in the classification of matroids is quite limited.
Of course, many more rank and information inequalities 
are available, but one may expect a better outcome from
the strategy introduced in~\cite{FKMP20},
which makes it possible to use 
undiscovered inequalities.
This claim is supported by the results 
obtained in secret sharing~\cite{FKMP20,GuRo2019}.
Specifically, the linear programming technique discussed
in Section~\ref{sec:SecShaSch}
can be adapted to the study of the classes of matroids
described in Section~\ref{pt:matbas}
by using the following linear programming problems
or their extensions to multiple pairs or triples of sets.

\begin{lpp}\label{ci4rep}
Given a polymatroid $(Q,r)$, and $A,B \subseteq Q$, 
determine if there is an extension $(Qx_o,r)$
such that $x_o$ is a common information for the pair $(A,B)$.  
\end{lpp}

\begin{lpp}\label{ak4almentr}
Given a polymatroid $(Q,r)$ and $U,V,Z \subseteq Q$, 
determine if there is an extension $(Q z_o,r)$
such that $z_o$ is an AK-information for the triple $(U,V,Z)$.
\end{lpp}

Those linear programming problems 
can be used to disprove that a given matroid
is folded linear or almost entropic. 
To that end, one can also apply
Linear Programming Problems~\ref{tab:LPavlamci}
or~\ref{tab:LPavlamak+} (or their extensions)
to any port of the given matroid.
The corresponding common information or AK-information
exists if and only if the optimal value is equal to $1$.

Nevertheless, that technique is useless for matroids of rank $3$
because they are CI-compliant.
This is easily proved by using the results about 
\emph{single-element extensions}
and \emph{modular cuts} of 
matroids in~\cite[Section 7.2]{Oxl92}.

\begin{proposition}
\label{st:r3ci}
Every matroid of rank $3$ is CI-compliant,
and hence also AK-compliant.
\end{proposition}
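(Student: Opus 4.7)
The plan is to leverage a classical result of Ingleton~\cite{Ing71}, namely that every matroid of rank~$3$ embeds in a projective plane, and then to exploit the fact that projective planes are modular matroids. Recall that in a modular matroid $(Q',r')$ every pair of flats $(A', B')$ satisfies $r'(A') + r'(B') = r'(A' \cap B') + r'(A' \cup B')$; the whole argument will be driven by this identity.

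First, fix a modular matroid $(Q', r')$ extending the given rank-$3$ matroid $(Q, r)$. To handle arbitrary $k$ uniformly, I would work throughout with polymatroids obtained by push-forward from $(Q', r')$. Concretely, maintain a labeling $\phi$ from the current ground set into $\Par(Q')$, with $\phi(q) = \{q\}$ initially for $q \in Q$, and, writing $\Phi(Y) = \bigcup_{y \in Y} \phi(y)$, declare the rank of any set $Y$ to be $f(Y) = r'(\Phi(Y))$. Monotonicity and submodularity of $f$ are inherited from those of $r'$, and $(Q,f) = (Q,r)$ by construction since the embedding preserves the rank function on subsets of $Q$.

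Given any pair $(A, B)$ of subsets of the current ground set, I would then set $F := \operatorname{cl}_{(Q',r')}(\Phi(A)) \cap \operatorname{cl}_{(Q',r')}(\Phi(B))$, a flat of $(Q', r')$. The modular law gives $r'(F) = r'(\Phi(A)) + r'(\Phi(B)) - r'(\Phi(A) \cup \Phi(B)) = f(A) + f(B) - f(AB) = f(A \lon B)$. Extending the ground set by a new element $x_o$ with $\phi(x_o) := F$ and using the same push-forward recipe produces a polymatroid extension in which $f(x_o) = f(A \lon B)$ and, because $F$ lies in both closures so adjoining it does not raise rank, $f(x_o \mid A) = f(x_o \mid B) = 0$; hence $x_o$ is a common information for $(A, B)$. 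Since the augmented structure is again a push-forward from the same modular matroid $(Q', r')$, the procedure iterates without any loss, establishing $k$-CI-compliance for every $k$.

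Finally, AK-compliance follows immediately from Proposition~\ref{st:CI2AK}: given any triple $(U, V, Z)$, a common information for the pair $(UV, Z)$ is an AK-information for $(U, V, Z)$, and the iterative construction above supplies one inside an extension that remains CI-compliant, hence $k$-AK-compliant for every $k$. The only delicate input is that every rank-$3$ matroid admits a modular extension~\cite{Ing71}; everything else amounts to the modular identity together with a routine check that the push-forward rank function is a polymatroid, so the main obstacle is really just invoking the projective-plane embedding cleanly.
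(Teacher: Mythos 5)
Your argument is correct and follows the same route as the paper: the paper's proof is exactly the observation that every rank-$3$ matroid extends to a projective plane (Ingleton), that projective planes are modular, and that any matroid admitting a modular extension is CI-compliant because $\operatorname{cl}(A)\cap\operatorname{cl}(B)$ serves as the common information. Your push-forward bookkeeping just makes explicit the iteration that the paper leaves implicit in the word ``clearly,'' so there is nothing to add.
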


\begin{proof}
It is enough to  prove that,
for every matroid $(Q,r)$ of rank $3$
and for every pair $(A,B)$ of subsets of $Q$,
there exists a matroid  $(Q x_o,r)$ of rank $3$
extending $(Q,r)$ such that 
$x_o$ is a common information for the pair $(A,B)$.
Obviously, it is enough to prove the result for 
pairs of hyperplanes.
Let $M = (Q,r)$ be a matroid of rank $3$
and let $(H_1,H_2)$ be a pair of 
distinct hyperplanes of $M$.
If there exists $x_o \in H_1 \cap H_2$
with $r(\{x_o\}) = 1$,
then $x_o$ is a common information for the  
pair $(H_1,H_2)$.
Otherwise $(H_1,H_2)$ is not a modular pair
and hence the set of flats $\{H_1,H_2,Q\}$
is a modular cut of the matroid $(Q,r)$.
Therefore, by~\cite[Theorem 7.2.3]{Oxl92}, $M$ can be extended to 
a matroid $(Q x_o, r)$ of rank $3$ such that
$r(H x_o) = r(H)$ if $H \in \{H_1,H_2,Q\}$
and $r(H x_o) = r(H) + 1$ if $H$ is any other flat of $M$. 
Clearly, $x_o$ is a common information for the pair $(H_1,H_2)$.  
\end{proof}

\section{Classification of Matroids on 8 Points}\label{subs:8-ptMat}

The matroids $AG(3,2)$, $AG(3,2)'$, $F_8$, $Q_8$, 
$V_8$ (Vamos matroid), $P_8$, and $L_8$ 
appearing in this section
and in Section~\ref{sec:SSSforMatrPort} are described 
in the Appendix of Oxley's book~\cite{Oxl92}.
Given a sparse paving matroid $M$, 
a new such matroid $M'$
can be obtained by \emph{relaxing} one of its
circuit-hyperplanes, that is, by transforming it into a basis.
In that situation, $M'$ is called a 
\emph{relaxation} of $M$.

\subsection{Matroids that are not Ingleton-compliant}
\label{pt:the39}

Mayhew and Royle~\cite{MaRo08} provided
a comprehensive list of matroids on up to $9$ points, 
specifying how many of them are simple, paving, or sparse paving. 
They also presented the list of all $44$
non-linear matroids on $8$ points,
which are sparse paving and of rank $4$.
Since every matroid on at most $7$ points is linear,
those are the smallest non-linear matroids.
Exactly $39$ of them are not Ingleton-compliant, 
which implies by Theorem~\ref{st:Ingspar} 
that they are neither almost entropic nor algebraic.
Those $39$ matroids, which include $F_8$ and $Q_8$, 
are relaxations of the binary affine cube $AG(3,2)$, 
with $AG(3,2)'$ and the Vamos matroid 
$V_8$ the ones among them with, respectively,
most and fewest circuit-hyperplanes. The matroids in~\cite{MaRo08} are named according to the database provided by the same authors in~\cite{RoMaDatabase}. In this work we follow the same notation.

\subsection{Folded Linear Matroids}
\label{pt:folded}

The 5 remaining non-linear matroids on $8$ points
are $P_1$, $P'_2$, $P''_2$, and $P_3$,
which are relaxations of $P_8$, 
and a relaxation $L'_8$ of $L_8$.
Take $Q = \{0,1,\ldots,7\}$
as the ground set of those sparse paving matroids.
The circuit-hyperplanes of $P_8$ are
\[
0127, 0136, 0235, 1234, 
0456, 1457, 2467, 3567,
0347, 1256,
\]
while the ones of $L_8$ are
\[
0246,1357, 0156, 2347, 0127, 3456, 0457, 1236.
\]
The matroid $P_1$ is obtained from $P_8$ 
by relaxing the circuit-hyperplane $3567$ of $P_8$.
The relaxation of $0347$ from $P_1$ gives the matroid $P'_2$,
while $P''_2$ is obtained from $P_1$ by relaxing $1256$.
The relaxation of both $0347$ and $1256$
from $P_1$ produces the matroid $P_3$. 
Finally, the matroid $L'_8$ is obtained from $L_8$ by 
relaxing the circuit-hyperplane $0457$.

By applying Linear Programming Problem~\ref{ci4rep} 
to those five non-linear matroids, 
we found out that they are 1-CI-compliant,
and hence also 1-AK-compliant by Proposition~\ref{st:CI2AK}.
We explored the possibility that some
of them were folded linear matroids.
To that end, we combined the technique 
to find linear representations of matroids
presented in~\cite[Section 6.4]{Oxl92}
with the tools for  folded linear matroids
given in~\cite{BBPT14} and we concluded 
that only $P_3$ and $L'_8$ are folded linear matroids.


\begin{theorem}
\label{thm:multilinrep}
The smallest non-linear matroids that are  folded linear are precisely $P_3$ and $L'_8$.
\end{theorem}

Before proving Theorem~\ref{thm:multilinrep}, 
we describe how to use the techniques
from~\cite{BBPT14,Oxl92} to that end.
Unless otherwise stated, 
the blocks in the matrices appearing in this section are 
square matrices of size $\ell$.
We use capital letters to represent them.
As usual, the identity and zero matrices
are denoted by $I$ and $0$, respectively.

Consider a  matroid $M = (Q,r)$ of rank $m$ on $n$ points,
a field $\field$, and a positive integer~$\ell$.
Assume that $Q = \{0,1,\ldots, n - 1\}$
is the ground set of $M$.
Every $\field$-linear representation
of the polymatroid $(Q, \ell r)$ is called 
an \emph{$(\field,\ell)$-linear representation of $M$},
and it is determined by a block matrix over $\field$
of the form
\begin{equation}
\label{eq:genblock}
B=\left(
\begin{array}{ccc}
	{B_{0,0}} & \cdots & {B_{0,n-1}} \\
	\vdots &  & \vdots  \\
 {B_{m-1,0}} & \cdots &  {B_{m-1,n-1}}  \\        
\end{array} \right),
\end{equation}
where each block $B_{i,j}$ is a 
square matrix of size $\ell$.
If $V_i$ is the vector subspace of $\field^{\ell m}$
spanned by the columns in the $i$-th block-column,
then $(V_i)_{i \in Q}$ is an $\field$-linear 
representation of the polymatroid $(Q,\ell r)$.
By the next result, there exists such a matrix
in which every block is either invertible or zero.

\begin{lemma}
\label{lemma:invblks}
Suppose that $A=\{0,1,\ldots,m-1\}$ is a basis of $M$.
For each $j = m, \ldots, n-1$, consider the
fundamental circuit $C(j,A)$, that is, the only 
circuit contained in $A \cup j$.
Then there exists a block matrix of the form
\begin{equation}\label{eqn:basismatrix}
\left(
\arraycolsep=4pt
\begin{array}{ccc|ccc}
{I} & \cdots & {0} & {B_{0,m}} & \cdots & {B_{0,n-1}} \\
\vdots & \ddots & \vdots & \vdots &  & \vdots \\
{0} & \cdots & {I} & {B_{m-1,m}} & \cdots & {B_{m-1,n-1}}  \\        
\end{array} \right),
\end{equation}
providing an $(\field,\ell)$-linear representation of $M$.
Furthermore, in every such representation,
each block $B_{i,j}$ with $j \ge m$ is 
invertible if $i \in C(j,A)$ and it is zero otherwise.
\end{lemma}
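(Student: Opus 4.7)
My plan has two steps: first, establish the existence of a representation with identity blocks on the left, then determine the structure of the remaining blocks from the fundamental circuits.

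For the existence, I would begin with an arbitrary $(\field,\ell)$-linear representation $B'$ of $M$ of the form~(\ref{eq:genblock}). Since $A$ is a basis of $M$, we have $\ell\, r(A)=\ell m$, so the $\ell m\times\ell m$ submatrix $B'_A$ of $B'$ formed by the first $m$ block-columns has full rank. Left-multiplying $B'$ by $(B'_A)^{-1}$ is an invertible change of basis in the ambient space $\field^{\ell m}$; it carries each subspace $V_x$ to an isomorphic copy, so every dimension $\dim V_X$ is preserved and the resulting matrix still represents $M$. By construction, this matrix is of the form~(\ref{eqn:basismatrix}).

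For the structural statement, I would fix a representation as in~(\ref{eqn:basismatrix}) and $j\in\{m,\ldots,n-1\}$. Let $C=C(j,A)$ and write $C\sss j=\{i_1,\ldots,i_k\}\subseteq A$, so that $r(C)=k$. The identity $r(C\sss j)=r(C)$ forces $V_j\subseteq V_{i_1}+\cdots+V_{i_k}$. In the normalised form, $V_{i_1}+\cdots+V_{i_k}$ is the span of the standard basis vectors in the rows $\{i_1,\ldots,i_k\}\times[\ell]$, so every column of the $j$-th block-column must vanish in the remaining rows. This immediately gives $B_{i,j}=0$ for all $i\in A\sss C$.

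To handle the converse, fix $i_\alpha\in C\sss j$. Since $C\sss i_\alpha$ is a proper subset of the circuit $C$, it is independent of size $k$, giving $\dim V_{C\sss i_\alpha}=\ell k$. Observe the chain $V_{\{i_1,\ldots,i_k\}\sss i_\alpha}+V_j=V_{C\sss i_\alpha}\subseteq V_{i_1}+\cdots+V_{i_k}$ between two $\ell k$-dimensional subspaces, which must therefore be an equality. In the normalised form, $V_{\{i_1,\ldots,i_k\}\sss i_\alpha}$ is the span of the standard basis vectors in the rows $(\{i_1,\ldots,i_k\}\sss i_\alpha)\times[\ell]$, and hence lies in the kernel of the coordinate projection $\pi_{i_\alpha}$ onto $\{i_\alpha\}\times[\ell]$, while $V_{i_1}+\cdots+V_{i_k}$ is sent onto the whole $\field^\ell$ by $\pi_{i_\alpha}$ (because it already contains $V_{i_\alpha}$). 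Combining both facts yields $\pi_{i_\alpha}(V_j)=\field^\ell$, and since $\pi_{i_\alpha}(V_j)$ is exactly the column span of $B_{i_\alpha,j}$, the $\ell\times\ell$ block $B_{i_\alpha,j}$ has full rank and is invertible.

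I do not expect any serious obstacle: the argument is essentially bookkeeping of dimensions and row/column block indices. The subtlety worth highlighting is that invertibility of $B_{i_\alpha,j}$ never requires any explicit computation inside the blocks, but falls out from a clean dimension count combined with the identity form of the leftmost $m$ block-columns.
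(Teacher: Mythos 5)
Your proposal is correct and follows essentially the same route as the paper's proof: normalize by left-multiplying with the inverse of the (invertible) submatrix of block-columns indexed by the basis $A$, then read off the zero blocks from $r(C(j,A)\sss j)=r(C(j,A))$ and the invertible blocks from the independence of $C(j,A)\sss i$. Your projection argument for invertibility is just a more explicit rendering of the paper's rank count on the submatrix of block-columns indexed by $C(j,A)\sss i$.
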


\begin{proof}
If $B'$, a block matrix of the form~\eqref{eq:genblock},
is an $(\field,\ell)$-linear representation of $M$, 
then the submatrix $T$ formed by
the block-columns corresponding to the basis $A$ is invertible.
Clearly, $B = T^{-1} B'$ is 
an $(\field,\ell)$-linear representation of $M$
of the form~(\ref{eqn:basismatrix}).
Consider $j \ge m$.
Without loss of generality, suppose
that $C(j,A) = \{0, \ldots, s-1,j\}$ for some $s \le m$.
Since the submatrix of $B$ formed by the 
block-columns corresponding to $C(j,A)$ has rank $\ell s$,
it is clear that $B_{i,j} = 0$ if $s \le i \le m-1$.
If, otherwise, $0 \le i \le s-1$, the rank of the submatrix
formed by the block-columns corresponding to
$C(j,A)\sss i$ equals $\ell s$, 
which implies that $B_{i,j}$ is invertible.
\end{proof}

Following~\cite{BBPT14}, we are going to use two operations
on block matrices representing folded linear matroids.
Namely, \emph{block-column scaling} 
and \emph{row-block scaling}.

\begin{lemma}[\cite{BBPT14} Proposition 2.12]\label{lemma:invblks2}
Let $M$ be an $\ell$-folded linear matroid
represented by a block matrix $B$ of the form~\eqref{eq:genblock}
and let ${G}$ be an invertible $\ell\times \ell$ matrix.
Then, for each $i = 0, \ldots, m-1$,  the matrix 
\[\left(
\begin{array}{ccc}
{B_{0,0}} & \cdots & {B_{0,n-1}} \\
\vdots &  & \vdots  \\
{G} {B_{i,0}} & \cdots & {G} {B_{i,n-1}} \\
\vdots &  & \vdots  \\
{B_{m-1,0}} & \cdots &  {B_{m-1,n-1}}  \\        
\end{array} \right)
\]
is also an $(\field,\ell)$-linear representation of $M$,
and the same applies to the matrix
\[
\left(
\begin{array}{ccccc}
{B_{0,0}} & \cdots & {B_{0,j}} {G} & \cdots & {B_{0,n-1}} \\
\vdots &  & \vdots  &  & \vdots \\
{B_{m-1,0}} & \cdots & {B_{m-1,j}} {G} &  \cdots & {B_{m-1,n-1}}  \\        
\end{array} \right)
\]
for each $j = 0, \ldots, n-1$.
\end{lemma}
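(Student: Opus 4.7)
The plan is to translate each block-matrix operation into a statement about the vector subspaces $(V_i)_{i \in Q}$ that the representation encodes, and then verify that the polymatroid $(Q,\ell r)$ is preserved. Recall that, by the discussion preceding the lemma, an $(\field,\ell)$-linear representation of $M$ is nothing more than a linear representation of $(Q,\ell r)$ in which $V_i \subseteq \field^{\ell m}$ is the column span of the $i$-th block-column of $B$. Two block matrices therefore represent the same polymatroid provided the associated subspaces yield the same values of $\dim \sum_{i \in X} V_i$ for every $X \subseteq Q$.

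For block-column scaling, I would argue that replacing the $j$-th block-column by the same block-column multiplied on the right by $G$ produces a matrix whose columns in position $j$ are $\field$-linear combinations of the original ones. Because $G$ is invertible, the new column span equals the old column span, so the subspace $V_j$ is unchanged; the other subspaces $V_i$ for $i \neq j$ are unaffected. Hence the full family $(V_i)_{i \in Q}$ is literally the same, and the new matrix represents the polymatroid $(Q,\ell r)$, i.e.\ it is an $(\field,\ell)$-linear representation of $M$.

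For row-block scaling, I would introduce the $\field$-linear map $T \colon \field^{\ell m} \to \field^{\ell m}$ that acts as $G$ on the coordinates indexed by row-block $i$ and as the identity on the remaining coordinates; since $G$ is invertible, so is $T$. The new matrix is obtained by applying $T$ to every column of $B$, so the $j$-th block-column now spans $T(V_j)$ for each $j \in Q$. For an arbitrary $X \subseteq Q$ one then uses that $T$ is an $\field$-linear bijection to conclude
\[
\dim \sum_{j \in X} T(V_j) \;=\; \dim T\!\left(\sum_{j \in X} V_j\right) \;=\; \dim \sum_{j \in X} V_j,
\]
so the family $(T(V_j))_{j \in Q}$ represents the same polymatroid $(Q,\ell r)$.

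Neither direction hides a real obstacle; the only step that deserves care is noticing that the polymatroid depends on the subspaces $V_i$ only through the dimensions of their sums, so that a global invertible change of coordinates on $\field^{\ell m}$ (as in the row-block case) acts trivially on the polymatroid even though it alters every block-column of $B$. Once this is stated, both parts of the lemma follow from the invertibility of $G$.
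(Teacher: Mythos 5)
Your proof is correct and complete: reducing both operations to statements about the column-span subspaces $V_j$, and observing that block-column scaling leaves each $V_j$ literally unchanged while row-block scaling replaces the family $(V_j)_{j\in Q}$ by $(T(V_j))_{j\in Q}$ for a global invertible map $T$, which preserves $\dim\sum_{j\in X}V_j$ for every $X\subseteq Q$. The paper itself gives no proof of this lemma --- it is imported verbatim from the cited reference --- and your argument is exactly the standard one that establishes it, so there is nothing to add.
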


Block scaling can help significantly in simplifying the study of 
$(\field,\ell)$-linear representations. 
By the following lemma, we can assume that several blocks ${B_{i,j}}$ 
in~\eqref{eqn:basismatrix} equal the identity matrix.
It is a straightforward generalization of~\cite[Theorem~6.4.7]{Oxl92},
the analogous result for linear representations of matroids.

\begin{lemma}
\label{ob:representation}
Let $\Mat$ be an $\ell$-folded $\field$-linear matroid
that admits an $(\field,\ell)$-representation $B'$
of the form~\eqref{eqn:basismatrix}.
Take $V = \{0, \ldots, m-1\}$
and $W = \{m, \ldots, n-1\}$.
Consider the bipartite graph $G$
with set of vertices $V \cup W$
such that $(i,j) \in V \times W$ 
is an edge if and only if $B'_{i,j} \ne 0$.
Let $E$ be the set of edges of a maximal
acyclic subgraph of $G$.
Then a sequence of block scalings
provides an  $(\field,\ell)$-representation $B$
of the form~\eqref{eqn:basismatrix}
such that $B_{i,j} = I$ if $(i,j) \in E$.
\end{lemma}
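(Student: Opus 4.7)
The plan is to mimic the classical argument for \cite[Theorem~6.4.7]{Oxl92}: process the edges of $E$ one at a time, applying block scalings from Lemma~\ref{lemma:invblks2} to turn the targeted block into the identity, while avoiding the disturbance of identity blocks created in previous steps.

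First, I would isolate two ``basis-preserving'' scaling moves. Given $i \in V = \{0,\ldots,m-1\}$ and an invertible $\ell\times\ell$ matrix $G$, left-multiplying block-row $i$ by $G$ and then right-multiplying block-column $i$ by $G^{-1}$ (both allowed by Lemma~\ref{lemma:invblks2}) leaves the form~\eqref{eqn:basismatrix} intact: $B_{i,i}=I \mapsto G \mapsto GG^{-1}=I$, $B_{k,i}=0$ for $k\ne i$ stays zero, and for $l<m$, $l\ne i$, the block $B_{i,l}=0$ is sent to $G\cdot 0 = 0$. Its only net effect outside the basis part is $B_{i,j}\mapsto GB_{i,j}$ for every $j\ge m$. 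The second move is just right-multiplying a single non-basis block-column $j\ge m$ by $G$, which changes only the blocks in that column. Both moves preserve~\eqref{eqn:basismatrix} as well as the zero/nonzero pattern of the blocks.

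Second, since $E$ is a forest on $V\cup W$, its edges can be enumerated as $e_1,\ldots,e_{|E|}$ so that each $e_k$ has an endpoint $v_k$ not incident to any $e_l$ with $l<k$; for instance, root every component of $E$ and list the edges in BFS order. I would then induct on $k$ with the invariant that the current matrix still has the form~\eqref{eqn:basismatrix} and that $B_{i',j'}=I$ for every $(i',j')\in\{e_1,\ldots,e_k\}$. Writing $e_k=(i,j)$ with $i\in V$, $j\in W$, the current block $B_{i,j}$ is invertible: this is true initially by Lemma~\ref{lemma:invblks}, and the two moves above preserve invertibility of nonzero blocks. If $v_k=i$, apply the first move with $G=B_{i,j}^{-1}$, sending $B_{i,j}$ to $I$ and touching only blocks in row $i$ with column index $\ge m$; since $i$ is fresh, no previously installed identity block lies in row $i$. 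If $v_k=j$, apply the second move with $G=B_{i,j}^{-1}$; only column $j$ is altered and, for the same reason, no previously installed identity block lies in column $j$.

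The main subtlety is precisely this bookkeeping: each scaling must be localized so as not to overwrite earlier work, and it is the acyclic (forest) structure of $E$ that supplies an ordering with a fresh endpoint at every step. Maximality of $E$ is not used in the construction; it would only enter when arguing that block scalings alone cannot do better, since any edge outside $E$ closes a cycle in $G$ and forces algebraic relations among the blocks along that cycle that block scalings cannot simultaneously satisfy.
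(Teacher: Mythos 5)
Your proof is correct and is precisely the ``obvious adaptation'' of \cite[Theorem~6.4.7]{Oxl92} that the paper's one-line proof invokes: the same fresh-endpoint ordering of the forest edges and the same localized row/column block scalings from Lemma~\ref{lemma:invblks2}, with the compensating scaling of the basis block-column to preserve the form~\eqref{eqn:basismatrix}. Nothing further is needed.
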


\begin{proof}
Adapt the proof of~\cite[Theorem~6.4.7]{Oxl92}
in the obvious way.
\end{proof}

The graph $G$ is connected for many matroids, and
in that case we can choose
any spanning tree of $G$ and we can assume that
the $n-1$ blocks $B_{i,j}$ with $j \ge m$
corresponding to its edges are equal to $I$.
We are now ready to prove Theorem~\ref{thm:multilinrep}.

\begin{proof}[Proof of Theorem~\ref{thm:multilinrep}]
Let $M$ be one of the matroids
$P_1, P_2',P_2'',P_3$
and suppose that it is an
$\ell$-folded $\field$-linear matroid
for some field $\field$ and some positive integer $\ell$.
Since $0123$ is a basis, 
by Lemmas~\ref{lemma:invblks} and~\ref{ob:representation},
we can assume that $M$ admits an 
$(\field,\ell)$-linear representation of the form 
\begin{equation}
\label{pmats:rep}
\left(
\arraycolsep=4pt
\begin{array}{cccc|cccc}
{I} & {0} & {0} & {0} & {0} & {I} & {I} & {I} \\
{0} & {I} & {0} & {0} & {I} & {0} & {I} & {A} \\
{0} & {0} & {I} & {0} & {I} & {B} & {0} & {C} \\        
{0} & {0} & {0} & {I} & {I} & {D} & {E} & {0} \\        
\end{array}
\right)
\end{equation}
where all nonzero blocks are invertible.
We next consider the circuit-hyperplanes 
$0456$, $1457$, and $2467$.
The submatrices corresponding to those sets are, respectively,
\[\left(
\arraycolsep=2pt
\begin{array}{cccc}
{I} & {0} & {I} & {I} \\
{0} & {I} & {0} & {I} \\
{0} & {I} & {B} & {0} \\
{0} & {I} & {D} & {E} \\
\end{array}\right), 
\left(
\arraycolsep=2pt
\begin{array}{cccc}
{0} & {0} & {I} & {I} \\
{I} & {I} & {0} & {A} \\
{0} & {I} & {B} & {C} \\
{0} & {I} & {D} & {0} \\
\end{array}\right), \text{ and }  
\left(
\arraycolsep=2pt
\begin{array}{cccc}
{0} & {0} & {I} & {I} \\
{0} & {I} & {I} & {A} \\
{I} & {I} & {0} & {C} \\
{0} & {I} & {E} & {0} \\
\end{array}\right).
\]
Each of these matrices has rank $3 \ell$.
Gaussian elimination transforms those matrices into
\[\left(
\arraycolsep=2pt
\begin{array}{cccc}
{I} & {0} & {I} & {I} \\
{0} & {I} & {0} & {I} \\
{0} & {0} & {B} & {-I} \\
{0} & {0} & {0} & {DB^{-1}+E-I} \\
\end{array}\right), 
\left(
\arraycolsep=2pt
\begin{array}{cccc}
{I} & {I} & {0} & {A} \\
{0} & {I} & {D} & {0} \\
{0} & {0} & {I} & {I} \\
{0} & {0} & {0} & {C-B+D} \\
\end{array}\right), \text{ and }
\left(
\arraycolsep=2pt
\begin{array}{cccc}
{I} & {I} & {0} & {C} \\
{0} & {I} & {E} & {0} \\
{0} & {0} & {I} & {I} \\
{0} & {0} & {0} & {A-I+E} \\
\end{array}\right).
\]
Therefore,
\begin{align}
\label{pmats:0456} {D}&=({I-E}){B}\\
\label{pmats:1457} {C}&={B-D}={E}{B}\\
\label{pmats:2467}  {A}&={I-E}
\end{align}
Since $3567$ is a basis, 
the corresponding submatrix has full rank.
Gaussian elimination on it yields
\[
\left(
\arraycolsep=2pt
\begin{array}{cccc}
{I} & {D} & {E} & {0} \\
{0} & {I} & {I} & {I} \\
{0} & {0} & {I} & {A} \\
{0} & {0} & {0} & C - B + BA \\ 
\end{array}\right).
\]
By the previous equations,
$C - B + BA = EB - BE$, and hence
\begin{equation}\label{eqn:noncom}
{EB}\not={BE},
\end{equation}
which is possible only if $\ell>1$.  

Clearly, the submatrix corresponding to the set $0347$
has rank $3 \ell$ if and only if $C = A$.
But $C \ne A$ because, otherwise, $B=E^{-1}-I$ 
by~\eqref{pmats:1457} and~\eqref{pmats:2467},
and then $EB = BE$, a contradiction with~\eqref{eqn:noncom}. 
As a consequence, $P_1$ and $P''_2$ do not admit
any $(\field,\ell)$-linear representation.

Similarly, the submatrix corresponding to $1256$
has rank $3\ell$ if and only if $D = E$.
We claim that this is impossible and, as a consequence,
$P'_2$ is not a folded linear matroid.
Indeed, if $D = E$, and since $I-E=A$ by~\eqref{pmats:2467} and thus invertible, then 
$B = (I - E)^{-1} E$ by~\eqref{pmats:0456} and 
\[
(I-E) EB = (I-E) E (I-E)^{-1} E =
E (I-E) (I-E)^{-1} E= E^2 = (I-E) BE,
\]
which is a contradiction with~\eqref{eqn:noncom}.

Since both $1256$ and $0347$
are bases of $P_3$, it is still possible to find an 
$(\field,\ell)$-linear representation for it.
If there exists such a representation, 
then the matrices corresponding to
$0347$ and $1256$ have full rank, 
and hence the matrices ${B}-{E^{-1}+I}$ and ${E}-{(I-E)B}$ are invertible.
After substituting  ${A}$, ${C}$, and ${D}$ in~\eqref{pmats:rep} 
according to~\eqref{pmats:2467}, \eqref{pmats:1457} and~\eqref{pmats:0456}, 
the following plausible $(\field,\ell)$-linear representation 
for $P_3$ is obtained
\begin{equation}
\left(\label{m:p3}
\arraycolsep=4pt
\begin{array}{cccc|cccc}
{I} & {0} & {0} & {0} & {0} & {I} & {I} & {I} \\
{0} & {I} & {0} & {0} & {I} & {0} & {I} & {I-E} \\
{0} & {0} & {I} & {0} & {I} & {B} & {0} & {EB} \\        
{0} & {0} & {0} & {I} & {I} & {(I-E)B} & {E} & {0} \\        
\end{array} \right).
\end{equation}
As a matter of fact, if we take
$$
B = \left(
\begin{array}{cc}
1 & 1 \\
1 & 0 \\
\end{array} \right)\text{ and   }
E = \left(
\begin{array}{cc}
0 & 2 \\
2 & 0 \\
\end{array} \right)$$ 
it can be checked that it results in a  
$(GF(5),2)$-linear representation for that matroid. 

We next prove in a similar fashion 
that $L_8'$ is also a folded linear matroid.
If this is the case, by 
Lemmas~\ref{lemma:invblks} and~\ref{ob:representation}, there exists an $(\field,\ell)$-linear representation of the form
\[
\left(
\arraycolsep=4pt
\begin{array}{cccc|cccc}
{I} & {0} & {0} & {0} & {I} & {I} & {0} & {I} \\
{0} & {I} & {0} & {0} & {D} & {C} & {I} & {A} \\
{0} & {0} & {I} & {0} & {E} & {I} & {I} & {B} \\        
{0} & {0} & {0} & {I} & {F} & {G} & {I} & {0} \\
\end{array}
\right).
\]
Proceeding in the same way as before, 
from the circuit-hyperplanes 
$0156$, $0246$, $1357$, $2347$, and $3456$
we can conclude that
\[{G}={I},\quad {F}={D},\quad {B}={I},\quad 
{A}={D}, \text{ and}\quad {C}={I}-{E}+{D}.\]
Since $0457$ is a basis, the corresponding 
submatrix
\[
\left(
\arraycolsep=2pt
\begin{array}{cccc}
{I} & {I} & {I} & {I} \\
{0} & {D} & {C} & {A} \\
{0} & {E} & {I} & {B} \\
{0} & {F} & {G} & {0} \\
\end{array}\right)
=
\left(
\arraycolsep=2pt
\begin{array}{cccc}
{I} & {I} & {I} & {I} \\
{0} & {D} & {I-E+D} & {D} \\
{0} & {E} & {I} & {I} \\
{0} & {D} & {I} & {0} \\
\end{array}\right)
\]
has full rank.
By Gausian elimination, we obtain
\[
\qquad
\left(
\arraycolsep=4pt
\begin{array}{cccc}
{I} & {I} & {I} & {I} \\
{0} & {I} & {D}^{-1} & {0} \\
{0} & {0} & {I}-{E}{D}^{-1} & {I} \\
{0} & {0} & {D}{E} {D}^{-1} -{E}& {0}
\end{array}\right),
\]
hence ${D}{E}{D}^{-1}-{E}$ has full rank.
In particular, this implies that $\ell>1$.
In conclusion, if $L_8'$ is a folded linear matroid, 
it admits an $(\field,\ell)$-linear representation of the form
\begin{equation}\label{m:l8'}
\left(
\arraycolsep=4pt
\begin{array}{cccc|cccc}
{I} & {0} & {0} & {0} & {I} & {I} & {0} & {I} \\
{0} & {I} & {0} & {0} & {D} & {I-E+D} & {I} & {D} \\
{0} & {0} & {I} & {0} & {E} & {I} & {I} & {I} \\        
{0} & {0} & {0} & {I} & {D} & {I} & {I} & {0}\\
\end{array}
\right)
\end{equation}
with $DE\neq ED$ and $I-E+D$ invertible. 
Take $i$, with $i^2 = -1$.
The choice 
\[
{D}=\left(
\begin{array}{cc}
0 & -1 \\
1 & 0 \\
\end{array} \right)
\text{ and   }
{E}=\left(
\begin{array}{cc}
i &  0 \\
0 & -i \\
\end{array} \right)
\]
does result in a 
$(GF(5^2),2)$-linear representation of $L'_8$.
This can be checked by using a computer.
\end{proof}

\subsection{Algebraic Matroids and Skew-Field Representable Matroids}


There exist folded linear matroids that are not algebraic~\cite{Ben16},
but none on $8$ points.

\begin{proposition}\label{p:flalg}
Every folded linear matroid on $8$ points is algebraic.
\end{proposition}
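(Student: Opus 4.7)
The plan is to combine the Mayhew--Royle enumeration with Proposition~\ref{thm:multilinrep}. Every linear matroid is algebraic over the same field (as recalled in Section~\ref{pt:matbas}, citing~\cite{Oxl92}), so every linear matroid on $8$ points is algebraic. By~\cite{MaRo08}, the non-linear matroids on $8$ points are precisely the $44$ rank-$4$ sparse paving matroids described in Sections~\ref{pt:the39} and~\ref{pt:folded}, and by Proposition~\ref{thm:multilinrep} only $P_3$ and $L_8'$ among them are folded linear. Hence the proof reduces to showing that $P_3$ and $L_8'$ are algebraic.

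For each of these two matroids I would exhibit an explicit algebraic representation over a finite field of small characteristic, modeled on the folded linear representations constructed in the proof of Proposition~\ref{thm:multilinrep}. Concretely, one fixes a prime power $q$ and transcendentals $t_1,\dots,t_m$ over $\mathbb{F}_q$, and assigns to each of the eight points an element of a suitable field extension of $\mathbb{F}_q(t_1,\dots,t_m)$ so that the transcendence degree of every subset matches the matroid rank. Verifying that a candidate assignment realizes $P_3$ (respectively $L_8'$) amounts to checking algebraic dependence for each circuit and independence for each basis, a finite list of Jacobian-type conditions that can be discharged with computer algebra.

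The main obstacle is the construction itself: there is no general algorithmic procedure for converting a folded linear representation into an algebraic one, since the Frobenius trick that algebraizes $\mathbb{F}_q$-linear matroids inflates ambient dimensions by the folding parameter $\ell$ and therefore does not directly produce an algebraic representation of the underlying matroid. In practice one writes down ad hoc candidates in positive characteristic and verifies them matroid by matroid. Since both $P_3$ and $L_8'$ are already known in the literature to be algebraic, the argument ultimately reduces to exhibiting (or citing) those two representations and verifying the required dependencies, completing the proof.
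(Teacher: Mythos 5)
Your proposal is correct and follows essentially the same route as the paper: reduce via the Mayhew--Royle enumeration and Proposition~\ref{thm:multilinrep} to the two matroids $P_3$ and $L_8'$, and then invoke their known algebraicity (the paper cites~\cite[Example~35]{BCD18} for both, and Lindstr\"om~\cite{Lindstrom1986} for $P_3$). The intermediate discussion about constructing explicit transcendental representations is unnecessary once you fall back on citing those references, which is exactly what the paper does.
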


\begin{proof}
Since linear matroids are algebraic, 
we only need to consider $P_3$ and $L'_8$. 
Both are algebraic over all fields 
with finite characteristic~\cite[Example~35]{BCD18}.
The result for $P_3$ was first proved by Lindstr\"om~\cite{Lindstrom1986}.
\end{proof}


The notion of linear representations of matroids
over fields can be extended 
to linear representations over skew-fields. 
Matroids that admit such a representation are said to be
{\em linearly representable over a skew-field}, 
or {\em skew-field representable} for short. 
The relation between skew-field representable matroids 
and folded linear matroids has been studied in~\cite{PeZw13,Ver15}. 
It is known that there exist folded linear matroids
that are not representable over any skew-field~\cite{PeZw13}. 
In the other direction, some connections have been made in~\cite{Ver15}.
We found that, for matroids with at most $8$ points, 
these two classes of matroids coincide. 

\begin{proposition}\label{cor:sf8}
A matroid on at most $8$ points is skew-field representable 
if and only if it is a folded linear matroid.
\end{proposition}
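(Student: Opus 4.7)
The plan rests on two classical facts combined with the classification of $8$-point matroids developed in Section~\ref{subs:8-ptMat}. First, every matroid representable over a skew-field satisfies the Ingleton inequality: Ingleton's original bilinear-form argument goes through without commutativity. Second, every matroid on at most seven points is linear, hence trivially both folded linear (with $\ell=1$) and skew-field representable. Thus the only matroids that need individual attention are the five non-linear Ingleton-compliant matroids on eight points, namely $P_1,P'_2,P''_2,P_3,L'_8$. By Proposition~\ref{thm:multilinrep}, only $P_3$ and $L'_8$ among these are folded linear, so the proof splits into two tasks.

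For the direction folded linear $\Rightarrow$ skew-field representable, the remaining work is to give explicit skew-field representations of $P_3$ and $L'_8$. The plan is to reuse the normal forms~\eqref{m:p3} and~\eqref{m:l8'}, replacing the $2\times 2$ matrix blocks by scalar elements of the real quaternions $\mathbb{H}$. For $P_3$ the choice $e=i$, $b=j$ gives $eb=k\ne -k=be$; one checks directly that $c=eb=k$ differs from $a=1-i$ and that $d=(1-e)b=j-k$ differs from $e=i$, so no extra dependency is forced on the relaxed sets $0347$ or $1256$. An analogous quaternionic choice in~\eqref{m:l8'} is proposed for $L'_8$. Full correctness is then verified basis-by-basis and circuit-hyperplane-by-circuit-hyperplane, most efficiently with computer help.

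For the converse direction it suffices to show that $P_1$, $P'_2$, $P''_2$ are not skew-field representable. Here the proof of Proposition~\ref{thm:multilinrep} adapts essentially verbatim. Given any hypothetical skew-field representation, left Gaussian elimination (which is unaffected by non-commutativity) puts it into the scalar analogue of~\eqref{pmats:rep}, and the circuit-hyperplane conditions for $0456,\,1457,\,2467$ yield the identities $d=(1-e)b$, $c=eb$, $a=1-e$. The basis condition for $3567$ becomes $eb\ne be$. For $P_1$ and $P''_2$, the circuit-hyperplane $0347$ forces $c=a$, hence $b=e^{-1}-1$ and $eb=1-e=be$, a contradiction. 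For $P'_2$, the circuit-hyperplane $1256$ forces $d=e$, so $b=(1-e)^{-1}e$; since $e$ and $1-e$ commute trivially in any skew-field, so do $e$ and $(1-e)^{-1}$, and one deduces $eb=be$, again contradicting the $3567$ condition.

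The main obstacle is the verification step in the first direction. The ambient $2\times 2$ matrix algebras used in Proposition~\ref{thm:multilinrep} are not skew-fields, so the block representations found there cannot be copied. One must instead locate specific elements of a skew-field for which every remaining basis has full rank and every remaining circuit-hyperplane is rank-deficient once substituted into~\eqref{m:p3} or~\eqref{m:l8'}. While the handful of identities used to derive the normal form are easy to satisfy with quaternionic choices, checking all the other basis and circuit-hyperplane conditions simultaneously is a finite but tedious task best handled by computer.
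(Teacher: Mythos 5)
Your proposal is correct and follows essentially the same route as the paper: exclude the $39$ non-Ingleton matroids (the paper phrases this via CI-compliance of skew-field representable matroids, which is the same underlying intersection-of-subspaces fact), rule out $P_1$, $P'_2$, $P''_2$ by rerunning the normal-form computation of Proposition~\ref{thm:multilinrep} over a skew-field, and exhibit quaternion representations of $P_3$ and $L'_8$ from the templates~\eqref{m:p3} and~\eqref{m:l8'} (the paper takes $B=k$, $E=j$ and $E=i$, $D=j$, respectively, verified by computer exactly as you anticipate).
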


\begin{proof}
Every linearly representable matroid is also skew-field representable. 
Skew-field representable matroids are CI-compliant,
so the 39 non-Ingleton compliant matroids discussed above
are not representable over skew-fields.
The techniques in Section~\ref{pt:folded}
can also be adapted to representations over skew-fields.
In particular, one can prove in that way that
$P_1$, $P'_2$ and $P''_2$ are not skew-field representable. 
Moreover, the matrix~\eqref{m:l8'} provides a
representation of $L'_8$ over the quaternion division ring
$\Real(i,j,k)$ by taking $E=i$ and $D=j$.  
A representation of $P_3$ over the quaternion 
division ring is obtained from the matrix~\eqref{m:p3} 
by taking $B=k$ and $E=j$.
\end{proof}


\begin{remark}\label{rem:P1andothers}
The only matroids on 8 points for which it is not known whether they are algebraic, almost entropic, or entropic are $P_1$, $P_2'$, and $P_2''$. 
\end{remark}

We can summarise the current classification of matroids on 8 points as follows. There are 44 matroids that are not linear (Section~\ref{pt:the39}) and, among them, exactly two are folded linear (Theorem~\ref{thm:multilinrep}). Also, on 8 points, a matroid is skew-field representable if and only if it is a folded linear matroid (Proposition~\ref{cor:sf8}), and the folded linear ones are algebraic (Proposition~\ref{p:flalg}). There are three matroids on 8 points for which it is not known whether they are algebraic, almost entropic, or entropic (Remark~\ref{rem:P1andothers}). A classification of these three matroids will conclude the characterization of algebraic, entropic, and almost entropic matroids on 8 points.

\subsection{Exploring Larger Matroids}\label{subs:9,10-ptMat}

By taking into account the results in~\cite{DFZ09}
about linear rank inequalities derived 
from the common information property,
one may expect that there are Ingleton-compliant matroids
that are not CI-compliant.   
As a consequence of the results
in Sections~\ref{pt:the39} and~\ref{pt:folded},
a matroid on $8$ points is $1$-CI-compliant
if and only if it is Ingleton-compliant.  
Mayhew and Royle~\cite{MaRo08} found out that every matroid 
on $9$ points that is not Ingleton-compliant
contains a minor on $8$ points with the same property.
By solving Linear Programming Problem~\ref{ci4rep} for
many matroids on $9$ points from the database~\cite{RoMaDatabase},
we found $171$ sparse paving matroids of rank $5$ on $9$ points
that are Ingleton-compliant but not CI-compliant. All 171 matroids are listed in Table~\ref{tab:ICnoCI9point}.

One of those examples is the tic-tac-toe matroid, 
which is described in Section~\ref{sec:SSSforMatrPort}.
It was shown to be non-linearly representable
by Alfter and Hochst\"attler~\cite{AlHo95}.
Actually, they proved that it does not satisfy the so-called
\emph{generalized Euclidean intersection property},
and the same proof can be used to show that it is not CI-compliant.
It is not known whether the tic-tac-toe matroid is algebraic or not.
By solving Linear Programming Problem~\ref{ak4almentr}, 
we checked that it is $1$-AK-compliant.
We did not find among the other $170$ examples any matroid
that is not $1$-AK-compliant but, due to computational limitations,
our exploration was incomplete.
Of course, the dual matroids of those $171$ matroids are not folded linear.
Nevertheless, we checked that they are $1$-CI-compliant
and hence, by Proposition~\ref{st:CI2AK}, also $1$-AK-compliant.
In addition, we note that, using different techniques, 62 of them were
found to be non-algebraic matroids by Bollen~\cite{Bollen18}.

\begin{figure}
\begin{longtable}{|c|c|c|c|c|c|}
	\hline
	264950 & 265553 & 268475 & 275391 & 282271 & 304085      \\ \hline
	264955 & 265555 & 268476 & 275394 & 282272 & 306452      \\ \hline
	264956 & 265556 & 268477 & 275398 & 283581 & 308279      \\ \hline
	264978 & 265601 & 268486 & 275399 & 283624 & 308280      \\ \hline
	264984 & 265602 & 268611 & 275410 & 283626 & 308285      \\ \hline
	264994 & 265622 & 268613 & 275411 & 283630 & 308381      \\ \hline
	265008 & 265623 & 268765 & 275416 & 283631 & 308385      \\ \hline
	265012 & 265696 & 268774 & 275417 & 283632 & 308386      \\ \hline
	265014 & 265715 & 268805 & 276341 & 291383 & 319504      \\ \hline
	265018 & 265760 & 268958 & 276430 & 292609 & 320838      \\ \hline
	265020 & 266399 & 268961 & 276671 & 293346 & 327043      \\ \hline
	265023 & 266923 & 269060 & 276792 & 293347 & 327134      \\ \hline
	265026 & 266948 & 269061 & 277240 & 293361 & 327157      \\ \hline
	265028 & 267669 & 269062 & 277656 & 294990 & 328810      \\ \hline
	265129 & 267671 & 269550 & 277673 & 295231 & 328817      \\ \hline
	265237 & 267672 & 269551 & 280230 & 299715 & 328818      \\ \hline
	265262 & 267675 & 269558 & 280241 & 299721 & 328917      \\ \hline
	265270 & 267678 & 269559 & 280246 & 300609 & 328928      \\ \hline
	265389 & 267871 & 269704 & 280249 & 300831 & 328941      \\ \hline
	265421 & 267897 & 269824 & 280253 & 301018 & 335557      \\ \hline
	265422 & 267946 & 269895 & 280254 & 303086 & 335558      \\ \hline
	265423 & 268016 & 270130 & 280733 & 303094 & 350495      \\ \hline
	265424 & 268017 & 270133 & 280891 & 303095 & 351377      \\ \hline
	265437 & 268018 & 273139 & 281004 & 303158 & 351471      \\ \hline
	265465 & 268099 & 273141 & 281568 & 303165 & 351483      \\ \hline
	265468 & 268115 & 273582 & 281572 & 303175 & tic-tac-toe \\ \hline
	265547 & 268120 & 274066 & 281581 & 304062 & \phantom{351483} \\ \hline
	265551 & 268272 & 274247 & 281794 & 304066 & \phantom{351483} \\ \hline
	265552 & 268474 & 275082 & 282270 & 304067 & \phantom{351483} \\ \hline
	\caption{Ingleton-compliant non-CI matroids with 9 Points}
	\label{tab:ICnoCI9point}\\
\end{longtable}
\end{figure}

\section{Secret Sharing for Matroid Ports}
\label{sec:SSSforMatrPort}

Consider a finite set of players $P$,
a special player $p_o \notin P$ and $Q = P p_o$.
For a polymatroid $(Q,f)$, we notate
$\Gamma_o(f)$ for its port at $p_o$ and   
$\sigma(f) = \max_{x \in P} f(x)/f(p_o)$.
Let $\Gamma$ be a connected access structure on the set $P$.
Then the parameters 
$\sigma(\Gamma)$ and $\lambda(\Gamma)$
introduced in Section~\ref{pt:presss}
and the optimal value $\kappa(\Gamma)$ 
of Linear Programming Problem~\ref{tab:LPkap1}
are characterized as follows.
\begin{itemize}
\item
$\kappa(\Gamma) = 
\min \{\sigma(f) \,:\, (Q,f) 
\mbox{ is a polymatroid with } \Gamma = \Gamma_o(f) \}$.
\item
$\sigma(\Gamma) = 
\inf \{\sigma(f) \,:\, (Q,f) 
\mbox{ is an entropic  polymatroid with } \Gamma = \Gamma_o(f) \}$.
\item
$\lambda(\Gamma) = 
\inf \{\sigma(f) \,:\, (Q,f) 
\mbox{ is a linear polymatroid with } \Gamma = \Gamma_o(f) \}$.
\end{itemize}
The following parameter has been 
recently introduced by Csirmaz~\cite{Csi19}.
\begin{itemize}
\item
$\overline{\sigma}(\Gamma) = 
\min \{\sigma(f) \,:\, (Q,f) 
\mbox{ is an almost entropic  polymatroid with } \Gamma = \Gamma_o(f) \}$.
\end{itemize}
Clearly, 
$1 \le \kappa(\Gamma) 
\le \overline{\sigma}(\Gamma)
\le \sigma(\Gamma)
\le \lambda(\Gamma)$.
Moreover, $\Gamma$ is a matroid port
if and only if $\kappa(\Gamma) = 1$,
and this is equivalent to 
$\kappa(\Gamma) < 3/2$~\cite[Theorem~4.4]{MaPa10}.
An access structure admits an
ideal secret sharing scheme
if and only if it is the port of an entropic matroid.
Besides, $\overline{\sigma}(\Gamma) = 1$
if and only if $\Gamma$ is the port of an almost entropic matroid.
The parameters $\kappa$ and $\lambda$ are invariant  
by duality, that is, $\kappa(\Gamma^*) = \kappa(\Gamma)$
and $\lambda(\Gamma^*) = \lambda(\Gamma)$
for every access structure $\Gamma$.
By the recent results in~\cite{Csi19,Kac18},
this is not the case for the parameter $\overline{\sigma}$.
If the access structure $\Gamma'$ is a minor of $\Gamma$, 
then $\kappa(\Gamma') \le \kappa(\Gamma)$, 
$\lambda(\Gamma') \le \lambda(\Gamma)$, and also
$\overline{\sigma}(\Gamma') \le \overline{\sigma}(\Gamma)$.

By using the techniques described in 
Section~\ref{sec:SecShaSch}, new lower bounds on 
$\overline{\sigma}(\Gamma)$ and $\lambda(\Gamma)$
were obtained in~\cite{FKMP20} for several access structures
including the ports of the matroids
$AG(3,2)'$, $F_8$, $Q_8$, and $V_8$.
Moreover, the bounds on $\lambda(\Gamma)$
for the ports of  $Q_8$ and $V_8$ are tight~\cite{FKMP20}.
Subsequently, an improved lower bound 
on $\overline{\sigma}(\Gamma)$ for a 
port of the Vamos matroid $V_8$ was
obtained in~\cite{GuRo2019} by
using the copy lemma instead of 
the Ahlswede--K\"orner lemma.

In this work, we continued the 
search for lower bounds for matroid ports
by using those methods, which, of course, provide relevant lower bounds
only when applied to matroids that are not CI-compliant.
We began by exploring the ports of the 39 matroids 
on $8$ points that are not Ingleton-compliant
and we found out that all of them satisfy 
$\lambda(\Gamma) \ge 4/3$  and
$\overline{\sigma}(\Gamma) \ge 9/8$.
A more general result is obtained by combining 
our bounds with Corollary~\ref{st:minIng}.

\begin{theorem}\label{thm:l&s4nonIC}
If a sparse paving matroid is not Ingleton-compliant,
then at least eight of its ports satisfy 
$\lambda(\Gamma)\geq 4/3$ and
$\overline{\sigma}(\Gamma)\geq 9/8$.
\end{theorem}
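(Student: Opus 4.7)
The plan is to combine Corollary~\ref{st:minIng} with two monotonicity facts already noted in the paper: (i) if $N$ is a minor of $M$, then the port of $N$ at any $p_o$ in the ground set of $N$ is a minor of the port of $M$ at $p_o$; and (ii) both $\lambda$ and $\overline{\sigma}$ are monotone non-increasing under taking minors of access structures. Together with the unstated computational fact just noted in the paragraph preceding the theorem, namely that every port $\Gamma$ of each of the $39$ non-Ingleton-compliant matroids on $8$ points satisfies $\lambda(\Gamma)\geq 4/3$ and $\overline{\sigma}(\Gamma)\geq 9/8$, this gives the statement almost for free.

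In detail, I would argue as follows. Let $M$ be a sparse paving matroid that is not Ingleton-compliant. By Corollary~\ref{st:minIng}, $M$ has a minor $N$ on $8$ points that is not Ingleton-compliant either. Since $N$ is sparse paving and violates Ingleton, it belongs to the list of $39$ matroids described in Section~\ref{pt:the39}, and hence every one of its $8$ ports $\Gamma_o(N)$, indexed by the points $p_o$ of the ground set of $N$, satisfies $\lambda(\Gamma_o(N))\geq 4/3$ and $\overline{\sigma}(\Gamma_o(N))\geq 9/8$. Now for each such $p_o$, the port $\Gamma_o(N)$ is obtained from $\Gamma_o(M)$ by the same sequence of deletions and contractions (restricted to players other than $p_o$) that produces $N$ from $M$; in particular it is a minor of $\Gamma_o(M)$. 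By the monotonicity of $\lambda$ and $\overline{\sigma}$ under minors, $\lambda(\Gamma_o(M))\geq \lambda(\Gamma_o(N))\geq 4/3$ and $\overline{\sigma}(\Gamma_o(M))\geq \overline{\sigma}(\Gamma_o(N))\geq 9/8$. Since the $8$ points of $N$ are $8$ distinct points of $M$, this furnishes the required $8$ distinct ports of $M$ meeting both bounds.

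There is essentially no obstacle once the $8$-point computation quoted before the theorem is accepted: the argument is a clean two-line reduction via Corollary~\ref{st:minIng} and minor monotonicity. The only thing one should take care to verify is that the deletion/contraction operations producing $N$ from $M$ do not touch the selected $p_o$, which is automatic because $p_o$ is chosen from the ground set of $N$; this is exactly the setting in which $\Gamma_o(N)$ is described as a minor of $\Gamma_o(M)$ in Section~\ref{pt:presss}. The real work, hidden behind the quoted lower bounds for the $39$ matroids, lies in the linear programming computations of Section~\ref{sec:SecShaSch} applied port by port, but those are invoked as a black box for the present theorem.
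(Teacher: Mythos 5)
Your proposal is correct and follows essentially the same route as the paper: invoke Corollary~\ref{st:minIng} to obtain a non-Ingleton-compliant minor on $8$ points, identify it as one of the $39$ matroids for which the computational bounds $\lambda(\Gamma)\geq 4/3$ and $\overline{\sigma}(\Gamma)\geq 9/8$ were verified, and transfer the bounds to the eight corresponding ports of $M$ via the minor relation between ports and the monotonicity of $\lambda$ and $\overline{\sigma}$. Your extra remark that the deletions and contractions avoid the chosen $p_o$ is a correct (and implicit in the paper) observation.
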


\begin{proof}
Let $M = (Q,r)$ be a sparse paving matroid 
that is not Ingleton-compliant. 
By Corollary~\ref{st:minIng}, it has a minor $M' = (Q',r')$ 
with $|Q'| = 8$ that is not Ingleton-compliant.
Hence $M'$ is one of the 39 matroids 
on $8$ points that are not Ingleton-compliant.
For every $p_o \in Q' \subseteq Q$, the port $\Gamma'$ of $M'$ at $p_o$ 
is a minor of the port $\Gamma$ of $M$ at $p_o$.
Therefore, 
$\lambda(\Gamma) \ge \lambda(\Gamma')\ge 4/3$
and $\overline{\sigma}(\Gamma) \ge\overline{\sigma}(\Gamma') \ge 9/8$.
\end{proof}

Better lower bounds on $\overline{\sigma}(\Gamma)$
have been obtained for some of those $39$ matroids,
which are presented in Table \ref{table:8ptstable}.
The names or numbers of the matroids are as they appear in \cite{MaRo08}, and in the database~\cite{RoMaDatabase}.

\begin{figure}
\begin{longtable}{@{}ccc@{}}
	\toprule
	\label{table:8ptstable}
	Matroid          & Port                    & Improved bound on $\overline{\sigma}(\Acc)$ \\* \midrule
	\endfirsthead
	\multicolumn{3}{c}%
	{{\bfseries Table \thetable\ continued from previous page}} \\
	\toprule
	Matroid          & Port                    & Improved bound on $\overline{\sigma}(\Acc)$ \\* \midrule
	\endhead
	\bottomrule
	\endfoot
	\endlastfoot
	1490           & 0, 2, 3, 4, 5, 6         & 8/7               \\
	1491           & 0, 3, 7                  & 33/29             \\
	1491           & 2, 4, 5, 6               & 8/7               \\
	1492           & 0, 1, 2, 3, 4, 5, 6, 7   & 49/43             \\
	1494           & 3, 4, 5, 6               & 33/29             \\
	1499           & 0, 2, 3, 4, 5, 6         & 8/7               \\
	1500           & 0, 2, 3, 4, 5, 6         & 8/7               \\
	1501           & 0, 1, 2, 3, 6, 7         & 33/29   \\
	1501           & 4, 5                     & 8/7   \\
	1502           & 5, 6                     & 8/7   \\
	1502           & 2, 3, 4, 7               & 33/29   \\
	1508           & 3, 4, 5, 6               & 33/29   \\
	1509           & 3, 4, 5, 6               & 33/29   \\
	1510           & 3, 4, 5, 6               & 33/29   \\
	1518           & 3, 4, 5, 6               & 33/29   \\
	1520           & 2, 3, 4, 7               & 33/29   \\
	1524           & 3, 4, 5, 6               & 33/29   \\
	1525           & 0, 2, 4, 5               & 33/29   \\
	1525           & 3, 6                     & 8/7   \\
	1526           & 0, 2, 3, 4, 5, 6         & 8/7   \\
	1527           & 0, 2, 4, 5               & 33/29   \\
	1528           & 0, 2, 3, 6               & 8/7   \\
	1529           & 1, 4, 5, 7               & 33/29   \\
	1531           & 2, 5, 6, 7               & 33/29   \\
	1532           & 4, 7                     & 8/7   \\
	1532           & 0, 1, 2, 3, 5, 6         & 33/29   \\
	1549           & 3, 4, 5, 6               & 33/29   \\
	1568           & 3, 4, 5, 6               & 33/29   \\
	1572           & 2, 3, 4, 7               & 33/29   \\
	1576           & 3, 4, 5, 6               & 33/29   \\
	1578           & 3, 4, 5, 6               & 33/29   \\
	1579           & 0, 2, 4, 5               & 33/29   \\
	1579           & 3, 6                     & 8/7   \\
	1580           & 0, 2, 3, 6               & 33/29   \\
	1641           & 3, 4, 5, 6               & 33/29   \\
	1646           & 2, 5, 6, 7               & 33/29   \\
	1654           & 3, 4, 5, 6               & 33/29   \\
	1656           & 0, 2, 3, 6               & 33/29   \\
	1657           & 0, 2, 3, 6               & 33/29   \\
	1660           & 0, 2, 3, 6               & 33/29   \\
	$AG(3,2)'$ & 1, 3, 5, 7               & 49/43             \\
	$F_8$        & 1, 7                     & 8/7               \\
	$F_8$        & 3, 4, 5, 6               & 33/29             \\
	$Q_8$        & 1, 4, 6, 7               & 49/43             \\
	$V_8^+$      & 0, 2, 3, 6                    & 33/29       \\
	$V_8$        & 2, 3, 6, 7               & \phantom{/}33/29$^\dagger$ \\* \bottomrule
	\caption{Bounds on ports of matroids on 8 points. $^\dagger$Improved in~\cite{GuRo2019} }
\end{longtable}
\end{figure}


We also applied the linear programs in Section~\ref{sec:SecShaSch}
to the ports of matroids 265389, 265421, 265468, 265551, 265556, 
265622, and the tic-tac-toe matroid; all Ingleton-compliant but non-CI-compliant matroids on nine points. For all of them, we obtained the lower bound $\lambda(\Gamma) \ge 6/5$.  
We were not able to find any non-trivial 
bound on $\overline{\sigma}(\Gamma)$.

By presenting a suitable linear secret sharing scheme,
we prove next that the bound $\lambda(\Gamma) \ge 6/5$
is tight for at least one of the ports of the tic-tac-toe matroid.
Take $Q = \{0,1,2\} \times \{0,1,2\}$
and, for every $(a,b) \in Q$, the $5$-element set
\[
C_{ab} = \{(i,j) \in Q \,:\, i = a \mbox{ or } j = b\}.
\]
We introduce several sparse paving matroids
with ground set $Q$ and rank $5$.
We call $M_o$ the one whose 
circuit-hyperplanes are all sets $C_{ab}$.
The \emph{tic-tac-toe matroid} $M$
is obtained from $M_o$ by relaxing the circuit $C_{11}$.
Finally, for every $(a,b) \ne (1,1)$, let $M_{ab}$
be the matroid that is obtained from the tic-tac-toe matroid
by relaxing the circuit $C_{ab}$.
Clearly, every matroid $M_{ab}$ is isomorphic
to either $M_{00}$ or $M_{01}$.
The matroids $M_o$ and $M_{ab}$ with $(a,b) \ne (1,1)$ 
are representable over every large enough field.
We skip the proof of this fact, 
but we present $\field_{11}$-linear representations
for $M_o$, $M_{00}$, and $M_{01}$,
which are given, respectively,
by the following matrices, whose columns are indexed as
$(0,0), (0,1), (0,2), (1,0), (1,1), (1,2), (2,0), (2,1), (2,2)$.
\[
\left(
\begin{array}{ccccccccc}
1 & 0 & 1 & 1 & 1 & 0 & 0 & 1 & 1 \\
1 & 1 & 0 & 1 & 1 & 0 & 0 & 0 & 0 \\
0 & 0 & 0 & 0 & 1 & 1 & 0 & 1 & 1 \\
0 & 0 & 0 & 1 & 1 & 0 & 1 & 1 & 0 \\
0 & 6 & 0 & 1 & 0 & 4 & 0 & 2 & 3 \\
\end{array}
\right)\qquad
\left(
\begin{array}{ccccccccc}
1 & 1 & 1 & 1 & 1 & 0 & 0 & 1 & 1 \\
0 & 0 & 0 & 5 & 0 & 1 & 1 & 0 & 10 \\
1 & 0 & 8 & 1 & 0 & 3 & 0 & 0 & 0 \\
0 & 0 & 0 & 1 & 1 & 0 & 6 & 7 & 0 \\
1 & 5 & 0 & 1 & 1 & 0 & 0 & 0 & 0 \\
\end{array}
\right)
\]
\[
\left(
\begin{array}{ccccccccc}
1 & 0 & 1 & 1 & 1 & 0 & 0 & 7 & 1 \\
1 & 1 & 0 & 1 & 1 & 0 & 0 & 0 & 0 \\
0 & 0 & 0 & 0 & 1 & 1 & 0 & 7 & 1 \\
0 & 0 & 0 & 1 & 1 & 0 & 1 & 1 & 0 \\
9 & 0 & 7 & 6 & 0 & 3 & 6 & 0 & 6 \\
\end{array}
\right).
\]
Let $\Gamma$ be the port of the tic-tac-toe
matroid $M$ at $p_o = (0,0)$.
Let $\Gamma_{11}$ be the port
of $M_o$ at $p_o$ and, for $(a,b) \ne (1,1)$,
let $\Gamma_{ab}$ be the port of $M_{ab}$ at $p_o$.
Since they are ports of $\field_{11}$-linear matroids, 
each of the nine access structures $\Gamma_{ab}$ admits
an ideal $\field_{11}$-linear secret sharing scheme.
Every qualified set of $\Gamma$
is qualified in at least five of the 
six access structures
$\Gamma_{11}$, 
$\Gamma_{00}$, 
$\Gamma_{01}$, 
$\Gamma_{02}$, 
$\Gamma_{10}$, and 
$\Gamma_{20}$.
In addition, the unqualified sets of $\Gamma$
are also unqualified in those six access structures.
Therefore, by combining the ideal linear secret sharing schemes
for those six access structures
in a $\lambda$-decomposition with $\lambda = 5$,
we obtain a linear secret sharing scheme for~$\Gamma$
with information ratio $6/5$.
The reader is referred to~\cite{Pad12,Sti94}
for more information about $\lambda$-decompositions.
 
\paragraph{Acknowldegements:} We thank Dillon Mayhew and Gordon F. Royle for helpful suggestions and also for providing us the matroid database~\cite{RoMaDatabase}. We thank Guus P. Bollen for his helpful suggestions on algebraic matroids.

\end{document}